\newtheorem{sat}{Theorem}				
\newtheorem{lem}{Lemma}
\newtheorem*{lem*}{Lemma}
\newtheorem{kor}{Corollary}			\newtheorem{prop}{Proposition}
\newtheorem*{defi*}{Definition}	\newtheorem*{bei*}{Example}
\newtheorem*{sat*}{Theorem}			\newtheorem*{kor*}{Corollary}
\newtheorem*{rmk*}{Remark}
\newtheorem*{conj*}{Conjecture}
\newtheorem*{prop*}{Proposition}
\newtheorem*{question*}{Question}
\let\ssection=\section
\renewcommand{\section}{\setcounter{equation}{0}\ssection}
\newtheorem*{namedtheorem}{\theoremname}
\newcommand{\theoremname}{testing}
\newtheorem*{threegap}{Three Gap Theorem}
\newtheorem*{theoremone}{Theorem 1}
\newtheorem*{theoremto}{Corollary 1}
\newtheorem*{theoremthree}{Theorem 2}
\theoremstyle{remark}
\newcommand{\BR}{\mathbb R}			
\newcommand{\BN}{\mathbb N}			
\newcommand{\BS}{\mathbb S}			
				\newcommand{\BT}{\mathbb T}
\newcommand{\BP}{\mathbb P}
\newcommand{\BE}{\mathbb E}
\newcommand{\comment}[1]{}
\newcommand{\DD}{\nabla}
\DeclareMathOperator{\Isom}{Isom}	
\DeclareMathOperator{\inj}{inj}
\DeclareMathOperator{\diam}{diam}
\DeclareMathOperator{\NND}{NND}
\DeclareMathOperator{\nnd}{nnd}
\DeclareMathOperator{\dist}{d}
\DeclareMathOperator{\derivative}{D}
\begin{document}

\title[]{The Three Gap Theorem and Riemannian Geometry}

\author{Ian Biringer \& Benjamin Schmidt}

\comment{
\begin{abstract}
The classical Three Gap Theorem asserts that for $n \in \BN $ and $p \in \BR $, there are at most three distinct distances between consecutive elements in the subset of $[0,1) $ consisting of the reductions modulo $1 $ of the first $n $ multiples of $p.$  Regarding it as a statement about rotations of the circle, we find results in a similar spirit pertaining to isometries of compact Riemannian manifolds and the distribution of points along their geodesics.  
\end{abstract}
}

\maketitle

The classical Three Gap Theorem asserts that for $n \in \BN $ and $p \in \BR $, there are at most three distinct distances between consecutive elements in the subset of $[0,1) $ consisting of the reductions modulo $1 $ of the first $n $ multiples of $p $ (see e.g. \cite{So}, \cite{Sw}).  There are several interesting generalizations of this theorem in  \cite{FrSo}, \cite{Vi}, and the references therein.  Regarding it as a statement about rotations of the circle, we find results in a similar spirit pertaining to isometries of compact Riemannian manifolds and the distribution of points along their geodesics.

Let $M^k$ denote a complete Riemannian $k$-manifold with associated distance function $\dist:M \times M \rightarrow \BR$, and let $X $ be a finite subset of $M$ with $|X|\geq2$.  For $x \in X $, define the distance from $x $ to its nearest neighbor in $X $ to be $\nnd (x, X) =\min_{y \in X \setminus \{ x\} } \dist (x,y) $.  We denote the set of all nearest neighbor distances in $X $ by $$\NND (X) =\{\nnd (x, X) \ | \ x \in X\}. $$  

The following variant of the Three Gap Theorem is the starting point for our work:

\begin{threegap}[Geometric Version]\label{3gap}
Let $S^1$ denote the unit circle.  For any rotation $R$, point $p \in S^1$, and $n \in \BN$, $$|\NND(\{R^{i}(p)\,\vert\, i =0,\ldots, n\})| \leq 3.$$
\end{threegap}

Our first result shows that this phenomenon is common to all isometries of compact Riemannian manifolds with bounds depending only on dimension, sectional curvatures, and diameter.

\begin{sat}\label{iso1}
For each $k \in \BN$, $\kappa \in \BR$ and $D>0$, there is a constant $K(k,\kappa,D)\in \BN$ such that for any complete Riemannian $k$-manifold $M^k$ with $\sec \geq \kappa$ and $\diam(M)\leq D,$ and for any $I\in \Isom(M)$, $p \in M$, and $n \in \BN$, $$|\NND(\{I^{i}(p)\,\vert\, i =0,\ldots, n \})| \leq K.$$  
\end{sat}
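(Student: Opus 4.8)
The plan is to reduce the theorem to a one–dimensional statement about the \emph{displacement function} $f(j):=\dist(p,I^{j}(p))$ of the orbit, and then to control that function by a volume–packing estimate coming from the curvature and diameter bounds. Since $I$ is an isometry, $\dist(I^{a}(p),I^{b}(p))=f(|a-b|)$ for all $a,b\ge 0$, and $f(j)\le\diam(M)\le D$. Writing $X=\{I^{i}(p):0\le i\le n\}$ and assuming for now that these points are distinct (if some $I^{j}$, $j\ge1$, fixes $p$, the orbit is finite and the claim is easily checked directly), one computes
\[
\nnd(I^{i}(p),X)\;=\;\min_{1\le t\le\max(i,\,n-i)}f(t),
\]
because the set of values of $|i-j|$ for $j\in\{0,\dots,n\}\setminus\{i\}$ is exactly $\{1,\dots,\max(i,n-i)\}$. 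Setting $G(N):=\min_{1\le t\le N}f(t)$, which is non–increasing in $N$, and using $\{\max(i,n-i):0\le i\le n\}=\{N\in\BZ:\lceil n/2\rceil\le N\le n\}$, we get $\NND(X)=\{G(N):\lceil n/2\rceil\le N\le n\}$. Consequently $|\NND(X)|=1+t$, where $t$ is the number of integers in the window $(\lceil n/2\rceil,n]$ at which $G$ strictly drops, i.e.\ the number of \emph{record minima} of $f$ (indices $N$ with $f(N)<f(s)$ for all $1\le s<N$) in that window. So it suffices to bound $t$ by $K(k,\kappa,D)-1$; note that $(\lceil n/2\rceil,n]$ is a window of \emph{ratio less than $2$}.

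The key point to establish is that record minima of $f$ cannot accumulate in a window of ratio less than $2$. Suppose $N_{1}<\dots<N_{t}$ are record minima of $f$ all lying in $(\lceil n/2\rceil,n]$, put $r:=f(N_{1})=\dist(p,I^{N_{1}}(p))\le D$, and set $q_{\ell}:=I^{N_{\ell}}(p)$. Since the $f$–values along record minima are non–increasing and $N_{\ell}\ge N_{1}$, we have $\dist(p,q_{\ell})=f(N_{\ell})\le r$, so $q_{1},\dots,q_{t}\in\overline{B(p,r)}$. On the other hand, for $\ell<m$ the integer $N_{m}-N_{\ell}$ is positive and strictly smaller than $N_{1}$ (here one uses $N_{\ell}>\lceil n/2\rceil$), so the record–minimum property of $N_{1}$ forces $\dist(q_{\ell},q_{m})=f(N_{m}-N_{\ell})>f(N_{1})=r$. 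Thus $\{q_{1},\dots,q_{t}\}$ is a subset of a ball of radius $r$ whose points are pairwise more than $r$ apart.

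Finally, the curvature and diameter hypotheses bound the size of any such configuration. The balls $B(q_{\ell},r/2)$ are pairwise disjoint and contained in $B(p,2r)$, and by the Bishop–Gromov relative volume comparison — which needs only $\operatorname{Ric}\ge(k-1)\kappa$, a consequence of $\sec\ge\kappa$ — each satisfies $\vol B(q_{\ell},r/2)\ge\big(V_{\kappa}(r/2)/V_{\kappa}(3r)\big)\,\vol B(p,2r)$, where $V_{\kappa}(\rho)$ denotes the volume of a $\rho$–ball in the $k$–dimensional model space of constant curvature $\kappa$. Comparing volumes gives $t\le V_{\kappa}(3r)/V_{\kappa}(r/2)$, and since $0<r\le D$ this ratio is bounded by a finite constant $C(k,\kappa,D)$ (it tends to $6^{k}$ as $r\to0$), so one may take $K(k,\kappa,D):=1+\lceil C(k,\kappa,D)\rceil$. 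As a sanity check, on the unit circle this argument returns the bound $3$: a ball of radius $r$ is an arc of length $2r$, which contains at most two points that are pairwise more than $r$ apart.

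The step I expect to be the main obstacle is this last packing estimate, specifically making the constant uniform over all scales $r\in(0,D]$ — which calls for some care with the model–volume ratios when $\kappa>0$ (where $V_{\kappa}$ must be truncated at $\pi/\sqrt{\kappa}$) — together with a clean treatment of the degenerate case of a periodic point. By contrast, the reduction in the first two paragraphs is purely formal and uses nothing about $M$ except that pairwise distances within the orbit are governed by the single function $f$.
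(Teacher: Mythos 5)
Your reduction to a packing problem is essentially identical to the paper's Lemma 1: the record minima of the displacement function $f$ in the window $(\lceil n/2\rceil,n]$ are exactly the points the paper packs, namely points of the outer half of the orbit lying within distance $r=\nnd\big(I^{\lfloor n/2\rfloor}(p),X\big)$ of $p$ and pairwise at least $r$ apart, so the combinatorial half of your argument and the paper's coincide (and your side remarks are fine: the periodic case gives $|\NND|=1$ by homogeneity of the full orbit under $I$). Where you diverge is in bounding the packing number. The paper transplants the packed configuration into the model space $M^k_{\kappa}$ via Toponogov's comparison theorem (its Lemma 2) and then bounds the model packing constant, using monotonicity in $r$ when $\kappa<0$ and the Euclidean $3^k$ ball-counting estimate when $\kappa\ge 0$; you instead stay inside $M$ and apply Bishop--Gromov relative volume comparison. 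Both are correct. Your route uses only $\mathrm{Ric}\ge(k-1)\kappa$, so it in fact proves a marginally more general statement, and the uniformity you worry about is harmless: the ratio $V_{\kappa}(3r)/V_{\kappa}(r/2)$ is continuous on $(0,D]$ and tends to $6^k$ as $r\to 0$, with $V_{\kappa}$ truncated at $\pi/\sqrt{\kappa}$ when $\kappa>0$ (where Myers already forces $D\le\pi/\sqrt{\kappa}$). The paper's route, on the other hand, yields the sharper constant $3^k+1$ of Corollary 1 in nonnegative curvature, whereas your volume count as written gives roughly $6^k+1$ there; tightening your containment radii would recover a comparable constant, but the Toponogov transfer is what makes the clean statement immediate.
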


A cleaner statement can be made for manifolds with nonnegative sectional curvatures.

\begin{kor}\label{iso2}
Let $M$ be a complete Riemannian $k$-manifold with non-negative sectional curvatures.  Then for any $I \in \Isom(M)$, $p \in M$, and $n \in \BN$, $$|\NND(\{I^{i}(p)\,\vert\,i =0,\ldots, n\})| \leq 3^k + 1.$$
\end{kor}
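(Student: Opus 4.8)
The plan is to write every nearest-neighbour distance in the orbit as a value of one monotone function built from the ``return distances'' $\delta(c):=\dist(p,I^c(p))$, $c\ge 1$, and then to bound the number of values it takes by a packing estimate special to nonnegative curvature. One may assume the points $I^0(p),\dots,I^n(p)$ are pairwise distinct, since otherwise $I$ acts on the orbit as a single cyclic permutation, every $\nnd(I^i(p),X)$ equals $\min\{\delta(c):1\le c<c_0\}$ with $c_0$ the period, and $|\NND(X)|=1$.

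\emph{Reduction to record minima.} First I would record that, because $I$ is an isometry, $\dist(I^a(p),I^b(p))=\delta(b-a)$ and $\delta(-c)=\delta(c)$, so with $X=\{I^i(p)\}_{i=0}^{n}$,
$$\nnd(I^i(p),X)=\min\{\delta(c): -i\le c\le n-i,\ c\ne 0\}=\min\{\delta(c): 1\le c\le\max(i,n-i)\}.$$
Setting $h(t)=\min\{\delta(c):1\le c\le t\}$, nonincreasing on $\{1,\dots,n\}$, this reads $\nnd(I^i(p),X)=h(\max(i,n-i))$; and as $i$ runs over $\{0,\dots,n\}$ the index $\max(i,n-i)$ runs over $\{\lceil n/2\rceil,\dots,n\}$, so $\NND(X)=\{h(t):\lceil n/2\rceil\le t\le n\}$. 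As $h$ is nonincreasing, this set has cardinality $1+r$, where $r$ counts the $t\in\{\lceil n/2\rceil+1,\dots,n\}$ with $\delta(t)<\delta(c)$ whenever $1\le c<t$ — the record minima of $\delta$ in the upper half of the range. It then suffices to show $r\le 3^k$.

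\emph{From records to a packing.} Let $c_1<\dots<c_r$ be those record minima, all in $(\lceil n/2\rceil,n]$, and put $P_j=I^{c_j}(p)$ and $\rho=\delta(c_1)$. For $j\ne j'$ one has $1\le|c_j-c_{j'}|\le\lfloor n/2\rfloor<c_1$, so the record property of $c_1$ yields $\dist(P_j,P_{j'})=\delta(|c_j-c_{j'}|)>\delta(c_1)=\rho$, while $\dist(p,P_j)=\delta(c_j)\le\delta(c_1)=\rho$ because $c_1,\dots,c_r$ are consecutive record minima. Thus $P_1,\dots,P_r$ lie in $\overline B(p,\rho)$ and are pairwise more than $\rho$ apart, and $r\le 3^k$ will follow from the packing fact: \emph{in a complete $k$-manifold with $\sec\ge 0$, a closed $\rho$-ball contains at most $3^k$ points that are pairwise at distance $>\rho$.}

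\emph{The packing fact, and the main difficulty.} For such points $P_j\in\overline B(q,\rho)$, I would set $a_j=\dist(q,P_j)\le\rho$ and let $\theta_{jj'}$ be the angle at $q$ between chosen minimizing geodesics to $P_j$ and $P_{j'}$; Toponogov's hinge comparison (here $\sec\ge 0$ is used) gives $\dist(P_j,P_{j'})^2\le a_j^2+a_{j'}^2-2a_ja_{j'}\cos\theta_{jj'}$, and since the left side exceeds $\rho^2\ge\max(a_j,a_{j'})^2$ this forces $\cos\theta_{jj'}<\tfrac12$, i.e.\ $\theta_{jj'}>\pi/3$. So the initial unit vectors of these geodesics are pairwise at angle $>\pi/3$ in $T_qM\cong\BR^k$; regarded as points of the unit sphere they are pairwise at Euclidean distance $>1$, so the disjoint open $\tfrac12$-balls about them, all inside the $\tfrac32$-ball about the origin, number at most $3^k$. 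Putting the steps together gives $|\NND(X)|\le 3^k+1$. I expect the reduction step to need the most care — pinning down the ``folding'' $i\leftrightarrow n-i$ and the record-minimum bookkeeping, and dispatching degeneracies (coincident orbit points, non-unique minimizing geodesics, ties among the $\delta(c)$) — while the curvature input is a single clean use of Toponogov and the final count is elementary.
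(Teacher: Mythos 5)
Your proposal is correct and takes essentially the same approach as the paper: your record-minimum/folding reduction is exactly the argument of Lemma \ref{packing} (the number of nearest neighbor distances is at most a packing number plus one), and your packing bound is the paper's Toponogov-plus-volume-count giving $3^k$. The only cosmetic difference is that you apply Toponogov in hinge form to get pairwise angles $>\pi/3$ and pack unit vectors in $T_pM$, whereas Lemma \ref{packingbounds} transplants the configuration to $\BE^k$ via comparison geodesics and packs $r/2$-balls in a $3r/2$-ball -- the same $3^k$ either way.
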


Working in a different direction, we also consider Riemannian metrics with uniform bounds for the number of nearest neighbor distances appearing in finite subsets of equally spaced points along geodesics.

\begin{defi*}[Bounded Geodesic Combinatorics]
Let $K\in\BN$.  A complete Riemannian manifold $M$ is defined to have \textit{K-bounded geodesic combinatorics} if for every geodesic $\gamma : \BR \to M $, $T\in\BR$, and $n \in \BN$, $$|\NND(\{\gamma (iT)\,\vert\, i =0,\ldots, n \})| \leq K.$$  
 \end{defi*}

When the particular $K\in\BN$ is irrelevant, we shall simply say that $M$ has bounded geodesic combinatorics.  By the Three Gap Theorem, the round circle has bounded geodesic combinatorics.  We will show below that all compact symmetric spaces have bounded geodesic combinatorics.  Other examples include $SC$-Riemannian manifolds, manifolds all of whose geodesics are simple closed and of common shortest period.  The Zoll type metrics on spheres provide explicit non-symmetric examples of these.  Finally, finite products of compact symmetric spaces and $SC$-metrics have bounded geodesic combinatorics.  It is possible that these are the only examples.  Our final theorem shows that this is the case among real-analytic surfaces.

\begin{sat}[Rigidity in Dimension 2]
\label{smooththeorem}
 Assume that $M$ is a Riemannian surface with bounded geodesic combinatorics.  Then $M$ is either
\begin{itemize}
\item $\BT^2 $ with a flat metric,
\item homeomorphic to $\BS^2 $ or $\BR \BP^2 $.
\end{itemize}
In the latter case, if $M$ is real-analytic or positively curved then it is either $\BR\BP^2 $ with a round metric or $\BS^2 $ with an $SC$-metric.
\end{sat}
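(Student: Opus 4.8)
The plan is to organize the argument by the topology of $M$, which I take to be closed. By Gauss--Bonnet, $\int_M K\,dA=2\pi\chi(M)$, so with the classification of closed surfaces the first assertion splits into: (a) no closed surface with $\chi(M)<0$ has bounded geodesic combinatorics; (b) a closed surface with $\chi(M)=0$ that does is a flat torus (so the Klein bottle is excluded and a torus with the property is flat); (c) $\chi(M)>0$ forces $M\cong\BS^2$ or $\BR\BP^2$, which is immediate. The refinement for $\chi(M)>0$ will then follow by upgrading ``bounded geodesic combinatorics'' to ``all geodesics closed'' under the extra hypothesis and invoking the classification of Besse surfaces. The organizing principle is that bounded geodesic combinatorics should rule out all chaotic or focusing behavior of the geodesic flow; concretely, I would try to deduce from it that the flow has zero topological entropy and, on a torus, no conjugate points.

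The crux---and the step I expect to be the main obstacle---is the implication: \emph{if the geodesic flow of $M$ carries a transverse homoclinic geodesic (equivalently a hyperbolic horseshoe, equivalently positive topological entropy), then $M$ does not have bounded geodesic combinatorics.} To prove it I would use the shadowing/specification property of the horseshoe to construct a unit-speed geodesic $\gamma$ together with a rapidly increasing sequence of times $t_1<t_2<\cdots$ such that $\gamma$ comes near $\gamma(0)$ only at the $t_k$ and at bounded multiples of the period of the hyperbolic orbit, with $\dist(\gamma(0),\gamma(t_k))=\epsilon_k$ a prescribed, rapidly decreasing geometric sequence and with the returns $\gamma(t_k)$ lying essentially along a single geodesic ray out of $\gamma(0)$. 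Taking $T$ small with all $t_k\in T\BZ$ and $n$ with $nT\ge t_{K+1}$, the set $\{\gamma(iT):0\le i\le n\}$ then contains, clustered near $\gamma(0)$, points at distances $\approx\epsilon_1>\epsilon_2>\cdots>\epsilon_{K+1}$ along a common ray, together with the ambient points at spacing $\approx T$; since collinear points at coordinates $0,\,c2^{-K},\,c2^{-(K-1)},\dots,\,c2^{-1}$ already exhibit on the order of $K$ distinct nearest-neighbor distances, this configuration has more than $K$ of them, contradicting $K$-boundedness. The genuine difficulty lies in the \emph{simultaneous} control of the return times (so that $\gamma(t_k)\in T\BZ$), the return distances, and the near-radiality of the returns, and in ruling out interference from the remaining points of the orbit; this should require careful bookkeeping with the local structure of the flow near the hyperbolic orbit and with the cut locus.

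Granting this, (a) is immediate: a closed surface with $\chi(M)<0$ has a fundamental group of exponential growth, so its geodesic flow has positive topological entropy (Dinaburg--Manning), hence a hyperbolic horseshoe by Katok's theorem for geodesic flows on surfaces. For (b), if the torus $M$ is not flat then by E.\ Hopf's theorem it has conjugate points, and I would rerun the construction with the close returns to $\gamma(0)$ now produced by the focusing of nearby geodesics at conjugate points, iterated along a recurrent geodesic to produce returns at many scales; this variant needs its own argument but should again force more than $K$ nearest-neighbor distances, so a torus with bounded geodesic combinatorics is flat. The Klein bottle is handled in the same spirit: a non-flat one also has conjugate points (lift to the torus double cover and apply Hopf), while for a flat one the glide reflection in $\pi_1$ identifies geodesics of slope $\theta$ with those of slope $-\theta$, turning the nearest-return problem into a genuinely two-parameter one (nearest point of a two-parameter orbit to a query point running at a different rate), which admits no three-gap-type bound and should produce arbitrarily many nearest-neighbor distances.

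For $\chi(M)>0$ the topology already gives $M\cong\BS^2$ or $\BR\BP^2$, which is the first half of the conclusion. Now suppose in addition $M$ is real-analytic or positively curved. By the principle above its geodesic flow has zero topological entropy, and here I would use real-analyticity (or positive curvature) to exclude exotic zero-entropy behavior---this is exactly where these hypotheses are essential, paralleling known smooth-versus-analytic rigidity phenomena and the restrictions on integrable surface geodesic flows (cf.\ Kozlov, Knieper--Weiss)---so that a non-closed geodesic, being quasi-periodic, would linger near a caustic and produce close returns with distances ranging over many scales, again contradicting bounded geodesic combinatorics by the configuration argument above. Hence every geodesic of $M$ is closed, so $M$ is a Besse surface, and the classification of these finishes the proof: on $\BS^2$, all geodesics being closed forces a common length and simplicity, i.e.\ an $SC$-metric, and on $\BR\BP^2$ it forces the round metric (by the solution of the Blaschke conjecture in dimension two).
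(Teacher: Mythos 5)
Your plan hinges on an implication you never actually establish: that a transverse homoclinic orbit (equivalently a horseshoe, positive topological entropy) is incompatible with bounded geodesic combinatorics. You yourself flag the real difficulty --- simultaneously forcing the return times to lie in $T\BZ$, prescribing the return distances $\epsilon_k$ below the ambient spacing $T$, keeping the returns nearly radial, and ruling out interference from the rest of the orbit --- and this is not bookkeeping: shadowing/specification controls return times only up to bounded errors, while your nearest-neighbor configuration needs them exactly on the arithmetic progression and needs the distances controlled at a finer scale. The same unproved mechanism is then asked to carry the $\chi=0$ case as well (conjugate-point focusing on a non-flat torus, and the flat Klein bottle, where ``should produce arbitrarily many nearest-neighbor distances'' is only asserted). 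The paper never goes through entropy: for positive genus it combines Ga\u{i}dukov's asymptotic rays with Lemma \ref{noasymptotic} to place a closed geodesic of a given free homotopy class through every point, shows these geodesics are simple and disjoint so they foliate the oriented cover (forcing it to be a torus, which simultaneously disposes of higher genus), and then applies Innami's theorem to get flatness; the flat Klein bottle is excluded because its geodesics can self-intersect arbitrarily often, contradicting Lemma \ref{intersections}.

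The more serious gap is the $\BS^2$/$\BR\BP^2$ case, which is where most of the paper's work lies. Your reduction there --- bounded geodesic combinatorics gives zero entropy, and real-analyticity or positive curvature should ``exclude exotic zero-entropy behavior'' so that all geodesics are closed --- fails as stated: a triaxial ellipsoid is real-analytic, positively curved, and has an integrable (hence zero-entropy) geodesic flow, yet almost all of its geodesics are non-closed. So zero entropy plus your auxiliary hypotheses cannot yield a Besse metric; the bounded-combinatorics hypothesis must be used again, directly, against quasi-periodic geodesics, and for this you offer only the caustic ``many scales of close returns'' heuristic, i.e.\ the same unproved crux. The paper's argument is concrete and quite different: Lemma \ref{simpleorclosed} shows a non-closed geodesic is simple and accumulates on itself in $TM$; Lemma \ref{conjugateimpliesintersection} shows a geodesic through a conjugate point forces nearby geodesics to intersect it, so every direction in $SM_p$ whose geodesic meets a conjugate point gives a closed geodesic; positive curvature then finishes via Rauch, while in the real-analytic case Warner's theorem makes the set $U$ of such directions open and nonempty, Lemma \ref{commonperiod} (bounded least periods, a continuous and then smooth period function, first variation) produces a common period on compact subintervals of a maximal interval $I\subset U$, and analytic continuation of the return function extends that period to all of $I$, contradicting maximality unless $U=SM_p$; only then do Gromoll--Grove and Pries give the $SC$-metric on $\BS^2$ and the round $\BR\BP^2$. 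None of this machinery, or a substitute for it, appears in your proposal.
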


\vskip 10pt

\textbf{Acknowledgements} Thank are due to Sujith Vijay for e-mail correspondence concerning (\cite{Vi}).  His paper provides an important idea used in the proof of Theorem \ref{iso1}.  We are also grateful to Juan Souto for the suggestion to consider asymptotic geodesics in Section 3 and to Jean-Fran\c{c}ois Lafont and Krastio Lilov for bringing the three gap theorem to the second author's attention. 

\section{Nearest Neighbors and Orbits of Isometries}
\label{isometries}
The goal of this section is to understand nearest neighbor distances in orbit segments of isometries on closed Riemannian manifolds.  We show that the number of distinct distances is limited by a certain packing number, and then use this to prove Theorem \ref{iso1}.  Finally, we construct orbit segments in $k $-dimensional flat tori with at least $k $ distinct nearest neighbor distances.

Throughout the following, let $(M,\dist) $ be a complete Riemannian $k$-manifold.  Define $P (M, r) $ to be the maximum number of points that can be packed pairwise $r $-apart into some open $r $-ball in $M $, and let $P (M) = \sup_{ r } P (M, r) $.

\begin{lem} \label{packing} For all $I \in \Isom(M)$, $p\in M$, and $n \in \BN $, we have
$$|\NND(\{I^i (p)\,\vert\,i =0,\ldots, n\})| \leq P (M) +1.$$
\end{lem}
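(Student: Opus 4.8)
My plan is to transport the problem to the real sequence $d_\ell:=\dist(p,I^\ell p)$, $\ell=0,\dots,n$, and then read an explicit packing off the structure of its running minima.

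Write $p_i:=I^i(p)$; since $I$ is an isometry, $\dist(p_i,p_j)=d_{|i-j|}$. First I would dispose of the degenerate case where the orbit is not injective on $\{0,\dots,n\}$: if $I^{\ell_0}p=p$ for some least $\ell_0\le n$, then $X=\{p_0,\dots,p_{\ell_0-1}\}$ and every point of $X$ has the same set $\{d_1,\dots,d_{\ell_0-1}\}$ of distances to the remaining points, so $|\NND(X)|\le 1$ and there is nothing to prove. So assume $d_\ell>0$ for $1\le\ell\le n$. Setting $D_M:=\min\{d_1,\dots,d_M\}$, the set of distances from $p_i$ to the other points of $X$ is $\{d_\ell:1\le\ell\le\max(i,n-i)\}$, whence $\nnd(p_i,X)=D_{\max(i,n-i)}$. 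As $i$ runs over $\{0,\dots,n\}$ the integer $\max(i,n-i)$ runs over all of $\{\lceil n/2\rceil,\dots,n\}$, so $\NND(X)=\{D_M:\lceil n/2\rceil\le M\le n\}$; since $M\mapsto D_M$ is nonincreasing, $|\NND(X)|=s+1$, where $m_1<m_2<\dots<m_s$ are the \emph{drop positions}, i.e. the $M\in(\lceil n/2\rceil,n]$ with $D_M<D_{M-1}$.

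The crux is a purely arithmetic observation about the $m_t$: because every $m_t>\lceil n/2\rceil$ we have $m_1+m_i\ge 2(\lceil n/2\rceil+1)>n\ge m_j$ for all $i,j$, hence $1\le m_j-m_i<m_1$ whenever $i<j$. On the other hand $D_{m_1}<D_{m_1-1}$ forces $d_{m_1}<d_\ell$ for every $1\le\ell\le m_1-1$, and $d_{m_t}=D_{m_t}\le D_{m_1}=d_{m_1}$ for every $t$. Feeding the arithmetic observation into the first of these, the $s$ points $R_t:=I^{m_t}(p)$ satisfy $\dist(p,R_t)=d_{m_t}\le d_{m_1}$ — so they lie in the closed ball $\overline{B}(p,d_{m_1})$ — while $\dist(R_i,R_j)=d_{m_j-m_i}>d_{m_1}$ for $i<j$.

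It remains to package this as an $r$-packing. Pick any $r$ with $d_{m_1}<r\le\min_{i<j}\dist(R_i,R_j)$ (this range is nonempty since all those distances strictly exceed $d_{m_1}$; when $s\le 1$ simply take any $r>d_{m_1}$). Then $R_1,\dots,R_s$ are pairwise at distance $\ge r$ and all lie in the open ball $B(p,r)\supseteq\overline{B}(p,d_{m_1})$, so $P(M)\ge P(M,r)\ge s$, and therefore $|\NND(X)|=s+1\le P(M)+1$. I expect the only genuine difficulty to be spotting the inequality $m_j-m_i<m_1$; once one observes that the relevant drop positions are confined to $(\lceil n/2\rceil,n]$ the desired packing is forced. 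The treatment of the degenerate orbit and the passage between open and closed balls are routine.
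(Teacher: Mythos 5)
Your proof is correct and follows essentially the same route as the paper: both reduce $\nnd(I^i p, X)$ to the running minimum $D_{\max(i,n-i)}$ of the distances $d_\ell=\dist(p,I^\ell p)$ and then pack the points realizing the distinct smaller values into a ball around $p$ whose radius exceeds their distances to $p$ but not their mutual distances. The only cosmetic differences are your choice of packing radius (just above $d_{m_1}$, the second-largest value, via the drop positions $m_t$) versus the paper's direct use of the largest nearest-neighbor distance $r=D_{\lceil n/2\rceil}$, and your separate treatment of periodic orbits, which the paper absorbs into its set-theoretic formulation of the minimum.
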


\begin{proof}
Consider for $l \in \BN $, the set $O_l =\{ I^i (p) \ | \ i =0,\ldots, l\} $.  Our goal is to show that $|\NND (O_n) | \leq P (M) +1 $.  First, notice that
\begin{eqnarray*}
\nnd (I^i (p),O_n) & = & \min\{\dist(I^j(p),I^i(p)) \,  \vert\, I^j (p)\in O_n \setminus \{ I^i (p)\} \} \\
& = & \min\{\dist(p,I^{|i-j|}(p)) \,  \vert \, I^j (p)\in O_n \setminus \{ I^i (p)\}  \} \\
& = & \min\{\dist(p,I^j(p)) \,  \vert  \, I^j (p)\in O_{\max\{i, n -i\} } \setminus \{p\}   \}.
\end{eqnarray*}
Therefore, $\nnd (I^i (p),O_n) =\nnd (I^{ n -i } (p),O_n) $ and  $$\nnd (p, O_n) \leq \nnd (I (p),O_n) \leq \ldots \leq \nnd (I^{\lfloor \frac { n } {2} \rfloor } (p),O_n).  $$
Note that the number of distinct values in this sequence is $ |\NND (O_n) | $.  Set $r=\nnd (I^{\lfloor \frac { n } {2} \rfloor } (p),O_n) $.  Nearest neighbor distances in $O_n $ other than $r $ come from points in $O_n \setminus O_{\lceil \frac { n } {2} \rceil } $ that lie at a distance less than $r $ from $p $.  Thus there must be at least $ |\NND (O_n) - 1 | $ distinct points $I^j(p) $ with $ \lceil \frac { n } {2} \rceil < j \leq n $ and $\dist (p,I^j (p)) <r $.  If $ I^{ j_1} (p) $ and $ I^{j_2} (p) $ are two such points then $$\dist (I^{ j_1} (p), I^{j_2} (p)) = \dist (p,I^{ |j_1 -j_2 | } (p)) \geq r.  $$  Therefore, the number of these points is bounded above by $P (M, r) $, and hence by $P (M) $.
\end{proof}

Observe that only two points can be packed pairwise $r $-apart into an interval of length $2r $, so Lemma \ref{packing} provides a proof of the Geometric Three Gap Theorem.  It is easy to see in general that if $M$ is closed, then $ P (M) $ is finite.  Therefore, any closed Riemannian manifold has an upper bound for the number of nearest neighbor distances occurring in finite orbit segments of isometries.  To prove Theorem \ref{iso1}, however, we must bound $ P (M) $  in terms of geometric data.

\begin{lem}[Bounding the Packing Constant]\label{packingbounds}
Assume that $M^k $ is a Riemannian $k $-manifold with sectional curvatures bounded below by $\kappa $.  Let $M^k_{\kappa } $ be the $k $-dimensional model space of constant curvature $\kappa $.  Then for each $r>0 $, $$P (M,r) \leq P (M^k_{\kappa },r). $$
\end{lem}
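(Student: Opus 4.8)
The plan is to show that any $r$-separated set contained in an open $r$-ball of $M$ can be transplanted, without decreasing any pairwise distance, into an open $r$-ball of the model space $M^k_\kappa$, using the Toponogov comparison theorem for a lower curvature bound. Fix $r>0$ and suppose $x_1,\dots,x_m$ lie in an open ball $B(q,r)\subset M$ with $\dist(x_i,x_j)\geq r$ for all $i\neq j$; it suffices to produce $m$ points of $M^k_\kappa$ lying in some open $r$-ball and pairwise at distance at least $r$. We may assume $m\geq 2$, so $\diam(M)\geq r$; in particular, when $\kappa>0$ the Bonnet--Myers theorem gives $r\leq\diam(M)\leq\pi/\sqrt{\kappa}=\diam(M^k_\kappa)$, which keeps all lengths below in the range where the constant-curvature comparison is valid.

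For each $i$, set $\ell_i=\dist(q,x_i)<r$, choose a minimizing geodesic $\sigma_i$ from $q$ to $x_i$ with initial unit vector $u_i\in T_qM$, and let $\alpha_{ij}$ denote the angle between $u_i$ and $u_j$. Now fix $\bar q\in M^k_\kappa$ together with a linear isometry $T_qM\to T_{\bar q}M^k_\kappa$, let $\bar u_i$ be the image of $u_i$, and put $\bar x_i=\exp_{\bar q}(\ell_i\bar u_i)$. Then $\dist_\kappa(\bar q,\bar x_i)=\ell_i<r$, so all the $\bar x_i$ lie in $B(\bar q,r)$, and the angle at $\bar q$ between the geodesics $\bar q\bar x_i$ and $\bar q\bar x_j$ is exactly $\alpha_{ij}$ — the point being that the unit tangent sphere of any $k$-manifold is isometric to the round $S^{k-1}$, so the whole angular configuration at $q$ is realized at $\bar q$. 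For a fixed pair $i\neq j$, the geodesic hinge in $M$ with vertex $q$, minimizing sides $\sigma_i,\sigma_j$, and included angle $\alpha_{ij}$ has third side of length $\dist(x_i,x_j)$; the corresponding hinge in $M^k_\kappa$ has the same side lengths $\ell_i,\ell_j$ and the same angle $\alpha_{ij}$, and its third side has length $\dist_\kappa(\bar x_i,\bar x_j)$ by the law of cosines in curvature $\kappa$. Since $\sec\geq\kappa$, the hinge form of Toponogov's theorem yields $\dist(x_i,x_j)\leq\dist_\kappa(\bar x_i,\bar x_j)$. Hence $\dist_\kappa(\bar x_i,\bar x_j)\geq r$ for all $i\neq j$, so $\bar x_1,\dots,\bar x_m$ is an $r$-separated set of $m$ points in the open ball $B(\bar q,r)$, giving $m\leq P(M^k_\kappa,r)$. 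Taking the supremum over all such packings in $M$ gives $P(M,r)\leq P(M^k_\kappa,r)$.

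The step needing the most care is the appeal to comparison geometry: one must use the global (Toponogov) hinge inequality valid on complete manifolds — with the side-length bound $\leq\pi/\sqrt{\kappa}$ when $\kappa>0$, which the reduction above arranges — and one must check that a lower curvature bound makes hinges close up, so that the \emph{model} third side is the larger one. Everything else, namely replicating the angle data at $q$ inside $M^k_\kappa$ and confirming that the transplanted points remain inside the open $r$-ball, is routine.
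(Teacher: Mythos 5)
Your proof is correct and follows essentially the same route as the paper: transplant the packed points to the model space via a linear isometry of tangent spaces and the exponential map, then apply the hinge (Toponogov) comparison for $\sec\geq\kappa$ to see that pairwise distances do not decrease while the points stay in an open $r$-ball. Your extra care about the $\kappa>0$ range via Bonnet--Myers and the explicit preservation of angles are fine but not needed beyond what the paper's argument already uses.
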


\begin{proof}
Consider $n $ points $\{ p_1, \ldots, p_n\} \subset M $ packed pairwise $r $-apart into the open $r $-ball around a point $p \in M $.  We claim that we can also pack $n $ points pairwise $r $-apart into an open $r $-ball in $M^k_{\kappa } $.

Fix a basepoint $\overline{p} \in M_{\kappa}$ and a linear isometry $L:T_{p}M \rightarrow T_{\overline{p}}M_{\kappa}.$  For $i =1,\ldots, n $, let $\gamma_i:[0,1] \rightarrow M$ be a minimizing geodesic in $M$ joining $\gamma_i(0)=p$ to $\gamma_i(1)=p_i$, and $$\overline{\gamma}_i:[0,1] \rightarrow M_{\kappa},\, \, \overline{\gamma}_i(t):=\exp_{\overline{p}} (t \cdot \gamma'_i (0)) $$ the comparison geodesic in $M^k_{\kappa } $. Note that $\dist (\overline{p},\overline{\gamma }_i (1)) = \dist (p,\gamma_i (1)) \leq r $.  Furthermore, by Toponogov's Comparison Theorem (see e.g. \cite[Theorem 2.2]{ChEb}), we have for $i \neq j $ that $$d(\overline{\gamma}_i(1),\overline{\gamma}_j(1))\geq d(\gamma_i(1),\gamma_j(1)) \geq r.  $$  Therefore the points $\{\overline{\gamma }_i (1) \ | \ i =1,\ldots, n\} $ are packed pairwise $r $-apart into the $r$-ball around $\overline{p}\in M^k_{\kappa } $.
\end{proof}

We are now ready to prove the following Theorem, stated previously in the introduction.

\begin{theoremone}
For each $k \in \BN$, $\kappa \in \BR$ and $D>0$, there is a constant $K(k,\kappa,D)\in \BN$ such that for any complete Riemannian $k$-manifold $M^k$ with $\sec \geq \kappa$ and $\diam(M)\leq D,$ and for any $I\in \Isom(M)$, $p \in M$, and $n \in \BN$, $$|\NND(\{I^{i}(p)\,\vert\, i =0,\ldots, n \})| \leq K.$$  
\end{theoremone}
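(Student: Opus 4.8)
The plan is to combine the two lemmas already established with a uniform upper bound on the packing constant $P(M^k_\kappa, r)$ that depends only on $k$, $\kappa$, and $D$. By Lemma \ref{packing}, it suffices to bound $P(M)$, and by Lemma \ref{packingbounds} we have $P(M,r)\leq P(M^k_\kappa,r)$ for every $r>0$, so in fact it suffices to bound $\sup_r P(M^k_\kappa,r)$ under the constraint that comes from $\diam(M)\leq D$. The key observation is that $n$ points packed pairwise $r$-apart inside an open $r$-ball of $M$ all lie within distance $2r$ of each other; but we also know $\diam(M)\leq D$, so a nearest-neighbor distance $r$ occurring in an orbit segment automatically satisfies $r\leq D$ (indeed $r \le D$ since any two distinct orbit points are at distance at most $D$). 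Hence we only ever need $P(M^k_\kappa, r)$ for $r\in(0,D]$, and for the lower curvature bound $\kappa$ we may take the model space $M^k_\kappa$ to be the sphere of curvature $\kappa$ (if $\kappa>0$), Euclidean space (if $\kappa=0$), or hyperbolic space of curvature $\kappa$ (if $\kappa<0$).

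The first main step is therefore to prove: for fixed $k$ and $\kappa$, the quantity $\sup_{0<r\leq D} P(M^k_\kappa, r)$ is finite. This is a standard volume-comparison estimate. Given $n$ points pairwise $r$-apart in an $r$-ball $B(x,r)$ of the model space, the balls $B(p_i, r/2)$ are pairwise disjoint and all contained in $B(x, 3r/2)$, so
\begin{equation*}
n \cdot \min_i \vol B(p_i, r/2) \ \leq\ \vol B(x, 3r/2).
\end{equation*}
In a constant-curvature space the volume of a metric ball depends only on its radius, so this gives
\begin{equation*}
n \ \leq\ \frac{\vol_{M^k_\kappa} B(3r/2)}{\vol_{M^k_\kappa} B(r/2)} \ =:\ f_{k,\kappa}(r),
\end{equation*}
and one checks that $f_{k,\kappa}(r)$ is bounded on $(0,D]$: as $r\to 0$ the ratio tends to $3^k$ by the Euclidean asymptotics of small balls, and on any compact subinterval of $(0,\infty)$ it is continuous, hence bounded (when $\kappa>0$ one must also note $3r/2$ cannot exceed the diameter of the model sphere, but this is automatic once $r$ is bounded, or one simply uses that $\vol B(3r/2)\leq \vol M^k_\kappa<\infty$). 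Setting $K(k,\kappa,D) = \lfloor \sup_{0<r\leq D} f_{k,\kappa}(r)\rfloor + 1$ completes the argument via Lemmas \ref{packing} and \ref{packingbounds}.

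The only real subtlety — and the step I expect to require the most care — is making sure the reduction to $r\leq D$ is airtight and that the volume-ratio bound is genuinely uniform near $r=0$. The reduction is clean: if $r=\nnd(I^i(p),O_n)$ is the largest nearest-neighbor distance, the points counted in the proof of Lemma \ref{packing} lie in a ball of radius $r\leq\diam(M)\leq D$, so only model-space packing numbers with $r\leq D$ are ever invoked. For the uniformity near $0$, the cleanest route is the explicit formula for $\vol_{M^k_\kappa}B(\rho)$ in terms of $\int_0^\rho \operatorname{sn}_\kappa(t)^{k-1}\,dt$ (with $\operatorname{sn}_\kappa$ the generalized sine), from which $f_{k,\kappa}(r)\to 3^k$ as $r\to0^+$ is immediate; alternatively one can absorb the small-$r$ case into Bishop–Gromov directly. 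Either way the bound depends only on $k$, $\kappa$, $D$, as required, and the case $\kappa\geq 0$ with the sharper constant $3^k+1$ recorded in Corollary \ref{iso2} drops out of the $r\to 0$ limit combined with the elementary fact that in a space of nonnegative curvature the ratio $\vol B(3r/2)/\vol B(r/2)$ is monotone (Bishop–Gromov) and hence bounded by its limiting value $3^k$.
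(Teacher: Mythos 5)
Your proposal is correct, and its skeleton is the paper's: reduce to bounding the packing constant via Lemma \ref{packing}, pass to the model space $M^k_\kappa$ via Lemma \ref{packingbounds}, and then bound the model-space packing numbers in terms of $k$, $\kappa$, $D$ alone. Where you diverge is in that last step. The paper splits into cases: for $\kappa<0$ it asserts (via an unspecified trigonometric calculation) that $P(M^k_\kappa,r)$ increases with $r$, so $P(M)\leq P(M^k_\kappa,D)$; for $\kappa\geq 0$ it compares with Euclidean space, where $P(\BE^k,r)$ is independent of $r$ by scaling and is bounded by $3^k$ via exactly the volume-ratio count you use. You instead run that volume-ratio argument in the curved model space itself, bounding $P(M^k_\kappa,r)\leq \vol B(3r/2)/\vol B(r/2)$ and showing this ratio is bounded on $(0,D]$ by continuity together with the Euclidean limit $3^k$ as $r\to 0$. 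This treats all signs of $\kappa$ uniformly, makes explicit the reduction to $r\leq D$ (which the paper leaves implicit; note also that $P(M,r)=1$ for $r>D$), and replaces the paper's unproved monotonicity claim with a self-contained estimate, at the cost of a less explicit constant than $P(M^k_\kappa,D)$.

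Two small touch-ups. First, your parenthetical that $3r/2$ ``automatically'' stays below the diameter of the model sphere when $\kappa>0$ is not literally true for $r$ near $D$, but your fallback is exactly right: interpreting $\vol B(3r/2)$ as the actual (capped) ball volume, at most $\vol(M^k_\kappa)$, the inequality and the boundedness of the ratio on $(0,D]$ survive unchanged. Second, in your closing remark about Corollary \ref{iso2}, Bishop--Gromov gives the bound $\vol B(3r/2)/\vol B(r/2)\leq 3^k$ directly, from monotonicity of $\rho\mapsto\vol B(\rho)/\rho^k$, rather than from monotonicity of the ratio $\vol B(3r/2)/\vol B(r/2)$ in $r$ itself; the conclusion you draw is still correct.
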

\begin{proof}
It suffices by Lemma \ref{packing} to provide a bound for $P (M) $.  If $\kappa<0$, a trigonometric calculation shows that $P (M^k_{\kappa },r) $ increases monotonically with $r $.  Therefore $P (M) \leq P (M^k_{\kappa },D) $, a constant depending only on $k $, $\kappa $ and $D $.  If $\kappa > 0 $, we have $P(M,r)\leq P(\mathbb{E}^k,r)$, which is independent of $r $ and therefore determined by $k $.
\end{proof}

Bounds on the packing constant of $k $-dimensional Euclidean space can be calculated explicitly.  Recall that a ball of radius $r$ in $\BE^k $ has volume $C(k)r^k$, where $C(k)$ is a dimensional constant.  If $ n $ points are packed pairwise $r$-apart into the $r$ ball around $p \in \BR^k $, then the $r/2$ balls centered at these $ n $ points are disjoint and are all contained in the $3r/2$ ball around $p $.  Hence, $$ n \cdot C(k)\cdot (r/2)^k \leq C(k)\cdot (3r/2)^k,$$ so $P(\BE^k)\leq 3^k $.  This produces the following:
 
\begin{theoremto}
Let $M$ be a complete Riemannian $k$-manifold with non-negative sectional curvatures.  Then for any $I \in \Isom(M)$, $p \in M$, and $n \in \BN$, $$|\NND(\{I^{i}(p)\,\vert\,i =0,\ldots, n\})| \leq 3^k + 1.$$
\end{theoremto}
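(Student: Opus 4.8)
The plan is to deduce this immediately from the two lemmas and the Euclidean packing estimate already developed. First I would invoke Lemma \ref{packing}, which gives $|\NND(\{I^i(p)\mid i=0,\ldots,n\})| \leq P(M)+1$ for any isometry $I$, basepoint $p$, and $n$; so it suffices to show $P(M) \leq 3^k$.

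Next I would observe that since $M$ has non-negative sectional curvatures we may take $\kappa = 0$ in Lemma \ref{packingbounds}, whose conclusion reads $P(M,r) \leq P(M^k_0, r) = P(\BE^k, r)$ for every $r>0$. Taking the supremum over $r$ yields $P(M) \leq P(\BE^k)$.

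The remaining point is the elementary volume comparison for Euclidean balls: if $n$ points are packed pairwise $r$-apart inside an open $r$-ball centered at some $q \in \BE^k$, then the open balls of radius $r/2$ about these points are pairwise disjoint and all lie in the ball of radius $3r/2$ about $q$; comparing volumes $C(k)r^k$ gives $n\,(r/2)^k \leq (3r/2)^k$, hence $n \leq 3^k$. This bound is independent of $r$ and of $q$, so $P(\BE^k) \leq 3^k$, and combining the three steps gives $|\NND(\{I^i(p)\mid i=0,\ldots,n\})| \leq P(M)+1 \leq 3^k+1$.

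There is no real obstacle here: every ingredient — Lemma \ref{packing}, Lemma \ref{packingbounds}, and the volume-packing inequality in $\BE^k$ — has already been established in the text, so the proof is just the assembly of these pieces, with the only mild care being to note that $\sec \geq 0$ licenses the choice $\kappa = 0$ (and hence a Euclidean, rather than hyperbolic, model space) in Lemma \ref{packingbounds}.
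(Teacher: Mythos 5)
Your proposal is correct and follows exactly the paper's own route: Lemma \ref{packing} reduces the statement to bounding $P(M)$, Lemma \ref{packingbounds} with $\kappa=0$ gives $P(M)\leq P(\BE^k)$, and the volume comparison of $r/2$-balls inside a $3r/2$-ball yields $P(\BE^k)\leq 3^k$. Nothing is missing; this is the same assembly the paper performs immediately before stating the corollary.
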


\subsection{Tori with many distinct nearest neighbor distances}
We now construct orbit segments of translations on flat $k $-dimensional tori with at least $k $ distinct nearest neighbor distances.  The idea is fairly simple.  First, we construct a flat $k $-torus $M$ by gluing together sides of a $k $-dimensional box with side-lengths equal to distinct primes.  This allows us to find a translational isometry of $M$ with an orbit consisting of the entire integer lattice in $M $.  A point $p $ in this orbit is the nearest neighbor to the $2k $ lattice points from which it is distance $1 $.  We can then perturb our isometry slightly so that $k$ of these points now have distinct distances to $p$, while preserving the fact that $p$ is their nearest neighbor.

Fix an orthonormal basis $\{ e_1,\ldots,e_k \} \subset \BR^k $.  Given $v \in \BR^k $, let $ T_v :\BR^k\to \BR^k $ be the translation by $v $.  Let $p_1, \ldots , p_k $ be distinct odd primes, and set $N =\prod_{i = 1}^{k } p_i $.  

Consider the flat torus $$ M =\BR^k / \Gamma, \, \, \Gamma = \left < T_{ p_1 \cdot e_1}, \ldots, T_{ p_k \cdot e_k } \right >.  $$  The translations $T_v $ descend to isometries $t_v : M \to M $, and we denote the projection of a point $p \in \BR^k $ by $\overline{ p } \in M $.

Set $\pi_j \in \{1, \ldots, p_j -1\} $ to be the mod-$p_j $ inverse of $\prod_{i \neq j } p_i $, and let $$a_j = \max (\pi_j,p_j -\pi_j) \cdot \prod_{i\neq j } p_i.  $$  Then $\frac {N - 1 } {2} \leq a_j < N $, $a_j \equiv \pm 1 \, (mod \ p_j) $, and $a_j \equiv 0 \, (mod \ p_i) $ for $i \neq j $. Thus for some $\delta_j \in \{ -1, +1\} $, $\overline {(a_j,\ldots,a_j) } = \overline {\delta_j \cdot e_j } $.

Pick some small $s >0 $, and let $v = (1-s\delta_1,\ldots,1-s\delta_k) $.  Define $ x_i =\overline {i \cdot v } $, and let $X =\{ x_0,\ldots, x_{N - 1 }\}.  $  Note that $X $ consists of the first $N $ elements of the orbit of $\overline {0} $ under the isometry $t_v : M \to M $.  We claim that for $j=1,\ldots,k $, the nearest neighbor in $X $ to $ x_{a_j } $ is $ x_0 $ and the distances $\dist (x_{a_j }, x_0) $ are all distinct.  

First, observe that $ x_{a_j } = \overline {\delta_j\cdot e_j  - a_j s (\delta_1,\ldots,\delta_k) }$, so $$ \dist (x_{a_j }, x_0) = \sqrt{ 1 -2 a_j s + k \left (a_j s \right)^2 }.  $$  Thus if $s $ is small, the distances $\dist (x_{a_j }, x_0) $ are less than $1 $ and are monotonically decreasing with $ a_j $.  In particular, they are distinct, so we need only show that $x_{a_j } $ and $x_0 $ are nearest neighbors in $X $.  

Assume on the contrary that $\dist (x_{a_j }, x_i) < \dist (x_{a_j }, x_0) $ for some $i $.  Then $\dist (x_{ |a_j -i | }, x_0) < \dist (x_{ a_j }, x_0) $ as well.  Since $s $ is small, $x_{ |a_j -i | } $ is very close to the projection of an element of the integer lattice in $\BR^k $.  But $\dist (x_{ |a_j -i | }, x_0) <1 $, so in fact it must be close to $\overline {\pm e_l}$ for some $1\leq l \leq k $.  So either $|a_j -i | = a_l $ or $ |a_j -i | = N - a_l $.  The latter case is impossible since $\dist (x_{N -a_l }, x_0) = \sqrt{ 1 + 2 a_j s + k \left (a_j s \right)^2 }, $ and is therefore greater than $1 $.  If $ |a_j -i | = a_l $, then since $\frac {N -1} {2} \leq a_j < N $ and $1 \leq i < N $ we must have $a_l < a_j $.  This cannot be, because as mentioned earlier $\dist (x_{a_j }, x_0) $ decreases monotonically with $a_j $.

\section{Bounded Geodesic Combinatorics}
\label{geodesicallyflat} 
Recall our definition given in the introduction.

\begin{defi*}[Bounded Geodesic Combinatorics] Let $K\in\BN$.  A complete Riemannian manifold $M$ is defined to have \textit{K-bounded geodesic combinatorics} if for every geodesic $\gamma : \BR \to M $, $T\in\BR$, and $n \in \BN$, $$|\NND(\{\gamma (iT)\,\vert\, i =0,\ldots, n \})| \leq K.$$  
 \end{defi*}

 We begin with some examples.  First, if $M$ is a flat torus then any sequence of equally spaced points along a geodesic in $M$ can be constructed as a segment of an orbit of a translational isometry.  Therefore, Theorem \ref{iso1} shows that flat tori have bounded geodesic combinatorics.  This principle can be applied more generally: 

\begin{prop}
Compact symmetric spaces have bounded geodesic combinatorics.  
\end{prop}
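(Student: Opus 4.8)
The plan is to reduce this to Theorem \ref{iso1} via the classical fact that geodesics in a symmetric space are orbits of one-parameter groups of isometries. Let $M$ be a compact Riemannian symmetric space and let $s_q \in \Isom(M)$ denote the geodesic symmetry at a point $q \in M$. Given any complete geodesic $\gamma : \BR \to M$, I would consider the \emph{transvections} $\tau_t := s_{\gamma(t/2)} \circ s_{\gamma(0)}$. The standard computation shows that each $\tau_t$ is an isometry translating $\gamma$ by $t$, i.e. $\tau_t(\gamma(s)) = \gamma(s+t)$ for all $s$, and that $\tau_s \circ \tau_t = \tau_{s+t}$, so $t \mapsto \tau_t$ is a one-parameter subgroup of $\Isom(M)$. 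In particular, for $T \in \BR$ and $I := \tau_T$ we get $\gamma(iT) = I^i(\gamma(0))$ for every $i$, so that
$$\{\gamma(iT) \mid i = 0, \ldots, n\} = \{I^i(p) \mid i = 0, \ldots, n\}, \qquad p := \gamma(0).$$

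Thus every finite set of equally spaced points along a geodesic in $M$ is an orbit segment of an isometry of $M$. Since $M$ is compact it has some lower sectional curvature bound $\kappa \in \BR$ and finite diameter $D = \diam(M)$, so Theorem \ref{iso1} supplies a constant $K = K(k,\kappa,D)$, depending only on $M$ and not on $\gamma$, $T$, or $n$, with $|\NND(\{I^i(p) \mid i = 0,\ldots,n\})| \leq K$. Combining the two displays shows $M$ has $K$-bounded geodesic combinatorics. (Alternatively, one can invoke Lemma \ref{packing} together with the remark that $P(M) < \infty$ for $M$ closed, obtaining the bound $P(M)+1$ directly.)

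The only substantive point is the identification of geodesics with orbits of transvections; this is classical (it is, essentially, the statement that a symmetric space is geodesically complete and homogeneous in a way compatible with parallel transport), so I do not expect a real obstacle here. The one thing to be careful about is that we need this for \emph{all} complete geodesics and with a bound uniform over the choice of $\gamma$ and the spacing $T$ — but the bound coming from Theorem \ref{iso1} is manifestly uniform, since it depends only on $k$, $\kappa$, and $D$, all of which are fixed once $M$ is fixed.
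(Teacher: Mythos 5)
Your proof is correct and follows essentially the same route as the paper: the paper also composes two geodesic symmetries at points spaced $T/2$ apart along $\gamma$ (it uses $I = s_{\gamma(0)} \circ s_{\gamma(-T/2)}$, your $\tau_T$ with the basepoint shifted) to obtain an isometry translating $\gamma$ by $T$, so that $\gamma(iT) = I^i(p)$, and then applies Theorem \ref{iso1}, which is available since compactness gives the curvature and diameter bounds. The one-parameter-group property of the transvections is true but not needed; iterating the single isometry $\tau_T$ suffices, exactly as in the paper.
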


\begin{proof}
Assume that $M$ is a compact Riemannian symmetric space.  Let $\gamma : \BR \to M $ be a unit speed paramaterization of a geodesic, and let $T\in \BR$.  By definintion, the geodesic involution $s_m:M\rightarrow M$ at each point $m \in M$ defined by $\exp_m(w) \mapsto \exp_m(-w)$ for each $w\in TM$ is a well-defined isometry of $M$.  Let $p=\gamma (0) \in M$ and $q=\exp_p(-\frac{T}{2} \dot{\gamma}(0))\in M$.  Define the isometry $I=s_p \circ s_q \in \Isom(M)$.  Then for each natural number $n \in \BN$, $\gamma (nT)=I^n(p)$, and hence Theorem \ref{iso1} implies $M$ has bounded geodesic combinatorics.
\end{proof}

Round spheres are symmetric spaces, and therefore have bounded geodesic combinatorics.  To see this more directly, note that the image of any geodesic in $\BS^k $ is an isometrically embedded circle, so the Three Gap Theorem implies that $\BS^k $ has $3 $-bounded geodesic combinatorics.  This proof motivates considering the following class of spaces.

\begin{defi*}
A closed Riemannian manifold $(M,h)$ is said to be an $SC$-manifold if all its geodesics are simple, periodic and of common least period.  
\end{defi*}  

The Zoll metrics on spheres give explicit non-symmetric examples of $SC$-metrics on spheres of each dimension (see e.g. \cite{Be}).  Note that the definition above does not imply that the images of geodesics are isometrically embedded circles, as is the case for round spheres.  However, the following Lemma allows us to work around this to show that $SC $-manifolds have bounded geodesic combinatorics. Note that the conclusion of the Lemma is equivalent to the existence of a uniform lower bound for the radii of tubular neighborhoods of geodesics in $M$.

\begin{lem}
\label{nicelyembedded}
Let $M$ be a closed Riemannian $SC $-manifold.  Then there exists a constant $r_M>0$ such that for every geodesic $\gamma \subset M $ and points $p,q \in\gamma $ satisfying $\dist_M(p,q)<r_M$, $\dist_M(p,q)=\dist_\gamma (p,q)$. 
\end{lem}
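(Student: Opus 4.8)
The plan is to argue by contradiction using a compactness argument on the unit tangent bundle $SM$. Suppose no such $r_M$ exists. Then we can find a sequence of geodesics $\gamma_n$ and pairs of points $p_n, q_n \in \gamma_n$ with $\dist_M(p_n, q_n) \to 0$ but $\dist_M(p_n,q_n) < \dist_{\gamma_n}(p_n, q_n)$; that is, the minimizing $M$-geodesic from $p_n$ to $q_n$ is a genuine shortcut, not a subarc of $\gamma_n$. Parametrize each $\gamma_n$ by arclength so that $\gamma_n(0) = p_n$, and let $\ell_n = \dist_{\gamma_n}(p_n, q_n) \in (0, L]$ where $L$ is the common least period (so $q_n = \gamma_n(\ell_n)$ traveling the short way around). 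Pass to a subsequence so that the initial velocities $\dot\gamma_n(0) \in SM$ converge to some $v \in S_pM$, and so that $\ell_n \to \ell \in [0, L]$. By continuous dependence of geodesics on initial conditions, $\gamma_n \to \gamma$ uniformly on $[0,L]$, where $\gamma$ is the $SC$-geodesic with $\dot\gamma(0) = v$, and $q_n \to \gamma(\ell)$. Since $\dist_M(p_n,q_n) \to 0$ we get $\gamma(\ell) = \gamma(0) = p$, so $\ell$ is a multiple of $L$; as $\ell \in [0,L]$, either $\ell = 0$ or $\ell = L$.

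The case $\ell = L$ is easy to rule out: it would force $\dist_{\gamma_n}(p_n,q_n) \to L$, but the existence of a shortcut gives $\dist_{\gamma_n}(p_n, q_n) > \dist_M(p_n,q_n) \to 0$ only as a lower-direction inequality, so I instead note that $\dist_M(p_n, q_n) \geq \min(\ell_n, L - \ell_n)$ is false in general — rather, I should observe directly that if $\ell = L$ then going the ``short way'' of length $\ell_n$ around $\gamma_n$ is nearly a full loop, while $p_n$ and $q_n$ are $M$-close, and I can replace $q_n = \gamma_n(\ell_n)$ by traversing $\gamma_n$ in the opposite direction a distance $L - \ell_n \to 0$; this shows $\dist_{\gamma_n}(p_n, q_n) \leq L - \ell_n \to 0$, contradicting $\ell = L > 0$. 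So we must have $\ell = 0$, i.e. $\ell_n \to 0$: the two points are close \emph{along} $\gamma_n$ as well.

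Now the heart of the matter. With $\ell_n \to 0$ and $p_n, q_n$ close along $\gamma_n$, I want to derive a contradiction from the shortcut. The key point is that a geodesic is \emph{locally} minimizing with a uniform radius: on a closed manifold there is a positive injectivity radius, or more simply, a convexity radius $\rho > 0$ such that any geodesic segment of length $< \rho$ is the unique minimizing path between its endpoints. Once $\ell_n < \rho$, the subarc of $\gamma_n$ from $p_n$ to $q_n$ of length $\ell_n$ is genuinely minimizing in $M$, so $\dist_M(p_n, q_n) = \ell_n = \dist_{\gamma_n}(p_n, q_n)$, directly contradicting the strict inequality we assumed. I expect this final step — pinning down exactly why $\ell_n \to 0$ rather than $\ell_n \to$ some positive value, and handling the ``wrap-around'' bookkeeping of which way around the periodic geodesic one measures $\dist_\gamma$ — to be the main obstacle; everything else is a routine compactness-plus-continuous-dependence argument. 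One subtlety to keep careful track of: $\dist_\gamma(p,q)$ should be interpreted as the length of the shorter of the two arcs of the closed geodesic between $p$ and $q$, and the convergence argument must respect that the ``shorter arc'' could in principle switch sides in the limit; taking the subsequence so that $\ell_n$ always denotes one consistent choice, and bounding $\ell_n \leq L/2$ after possibly relabeling, removes this ambiguity and makes the $\ell = 0$ conclusion clean.
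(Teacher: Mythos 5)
Your proposal is correct and is essentially the paper's argument: a compactness/contradiction argument in which a sequence of offending geodesics converges to a limit geodesic that would have to satisfy $\gamma(0)=\gamma(d)$ for some $0<d\le L/2$, contradicting simplicity with common least period $L$, together with the fact that geodesic segments shorter than the injectivity (or convexity) radius are minimizing. The only difference is bookkeeping: the paper invokes the injectivity-radius bound \emph{before} passing to the limit (so $d_i\ge\inj(M)$ and the limit immediately violates simplicity), whereas you pass to the limit first, use simplicity to force $\ell\in\{0,L\}$, and then rule out $\ell=0$ with the same minimality fact --- a logically equivalent reordering.
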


\begin{proof}

Assume this is not the case, and that the common least period of geodesics in $M$ is $L $.  Then we can find a sequence of unit speed geodesics $\gamma_i : \BR \to M $ and times $0 <d_i \leq \frac {L } {2} $ such that $\dist (\gamma_i (0),\gamma_i (d_i)) \to 0 $ but $\gamma_i ([0,d_i]) $ is not minimizing.  Since $d_i $ cannot be less than the injectivity radius of $M$, we can pass to an appropriate subsequence so that $\gamma_n $ converge pointwise to a geodesic $\gamma: \BR \to M $ and $d_i \to d \in [\inj (M) ,\frac {L } {2}] $.  By continuity, $\gamma (0) = \gamma (d) $, contradicting the fact that $\gamma $ is simple with least period $L $.
\end{proof}

\begin{prop}\label{SC}
Suppose that $M$ is a closed Riemannian $SC$-manifold.  Then $M$ has bounded geodesic combinatorics.
\end{prop}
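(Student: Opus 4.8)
The plan is to exploit the fact that, in an $SC$-manifold, the image $C=\gamma(\BR)$ of any geodesic is an embedded circle whose length equals the common least period $L$: indeed $\gamma$ is unit speed, simple, and $L$-periodic, so it descends to an injective (hence, by compactness, an embedding) map $\BR/L\BZ\to M$. Equip $C$ with its intrinsic length metric $\dist_\gamma$, which satisfies $\dist_M\le\dist_\gamma$ on $C$. Viewing $\bar X:=\{\gamma(iT)\mid i=0,\dots,n\}\subset C$ as the reductions modulo $L$ of the first $n+1$ multiples of $T$, the Three Gap Theorem (applied to the circle of circumference $L$, which is similar to $S^1$) gives that the set of \emph{intrinsic} nearest neighbor distances $\{\min_{y\in\bar X\setminus\{x\}}\dist_\gamma(x,y)\mid x\in\bar X\}$ has at most three elements. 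The one obstruction to concluding is that the $\dist_M$-nearest neighbor of $x\in\bar X$ in $\bar X$ need not be its $\dist_\gamma$-nearest neighbor, since $C$ may wind back near $x$; Lemma \ref{nicelyembedded} and its constant $r_M>0$ are exactly what is needed to control this.

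I would partition $\bar X=A\sqcup B$, where $A$ consists of the points $x$ with $\nnd(x,\bar X)<r_M$ (distances measured in $\dist_M$) and $B$ of the rest. For $x\in A$, choose $y\in\bar X$ with $\dist_M(x,y)=\nnd(x,\bar X)<r_M$; then $\dist_M(x,y)=\dist_\gamma(x,y)$ by Lemma \ref{nicelyembedded}, and any $z\in\bar X$ with $\dist_\gamma(x,z)<\dist_\gamma(x,y)$ would satisfy $\dist_\gamma(x,z)<r_M$, hence $\dist_M(x,z)=\dist_\gamma(x,z)<\dist_M(x,y)$, contradicting the choice of $y$. So on $A$ the $\dist_M$- and $\dist_\gamma$-nearest neighbor distances agree, and by the Three Gap Theorem $A$ contributes at most three values to $\NND(\{\gamma(iT)\mid i=0,\dots,n\})$.

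For $B$, list the distinct points of $\bar X$ in cyclic order along $C$ as $x_1,\dots,x_N$, cutting $C$ into arcs $A_1,\dots,A_N$ with $\sum_i|A_i|=L$ (the cases $N\le 2$ being trivial). If $x_i\in B$ then $\dist_M(x_i,x_{i\pm1})\ge r_M$, hence $\dist_\gamma(x_i,x_{i\pm1})\ge r_M$, which forces $|A_{i-1}|\ge r_M$ and $|A_i|\ge r_M$. In particular $i$ belongs to $\{\,j : |A_j|\ge r_M\,\}$, a set of size at most $\lfloor L/r_M\rfloor$ since the arcs are disjoint; as $x_i\mapsto i$ is injective this gives $|B|\le\lfloor L/r_M\rfloor$, so $B$ contributes at most $\lfloor L/r_M\rfloor$ values. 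Altogether $|\NND(\{\gamma(iT)\mid i=0,\dots,n\})|\le 3+\lfloor L/r_M\rfloor=:K$, and $K$ depends only on $M$.

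The main point needing care is the $A$-case matching of ambient and intrinsic nearest neighbors through $r_M$, together with the (routine but worth stating) fact that $\gamma(\BR)$ really is an embedded circle of length $L$ with $\dist_M\le\dist_\gamma$; once these are in place the $B$-case counting and the final bookkeeping are straightforward.
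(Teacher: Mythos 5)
Your proof is correct and follows essentially the same route as the paper: both split the nearest-neighbor distances at the threshold $r_M$ of Lemma \ref{nicelyembedded}, identify the sub-$r_M$ ones with intrinsic nearest-neighbor distances along the embedded circle $\gamma$ (so the Three Gap Theorem caps their number at three), and bound the number of remaining values by a finiteness count. The only difference is cosmetic: you count the points with large nearest-neighbor distance intrinsically, getting $\lfloor L/r_M \rfloor$ from the arcs of length at least $r_M$, whereas the paper notes that such points are pairwise $r_M$-separated in $M$ and invokes the ambient packing constant $K_M$ of the compact manifold, yielding $K_M+3$.
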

\begin{proof}
Let $\gamma : \BR \to M $, $T>0$, and $n \in \BN$.  As $M$ is compact, there is a number $K_M \in \BN$, so that at most $K_M$ points in $M$ can be pairwise $r_M$ apart, where $r_M$ is as in the previous Lemma.  Therefore, at most $K_M$ of the nearest-neighbor distances $\NND(\{\gamma (iT)\,\vert\, i\in \{0,\ldots,n\}\})$ are not realized as distances measured in $\gamma$.  By the Three Gap Theorem, $$|\NND(\{\gamma (iT)\,\vert\, i\in \{0,\ldots,n\}\})|\leq K_M+3,$$ concluding the proof.
\end{proof}

The argument above can be applied more generally to spaces in which all geodesics are contained in nicely embedded flat tori:

\begin{lem}
\label{geoflat}

Let $M$ be a closed Riemannian manifold.  Assume that there exists a constant $r_M>0$ such that every geodesic $\gamma \subset M$ is contained in an embedded totally geodesic flat torus $F_\gamma \subset M $ with the property that if two points $p,q \in F_\gamma $ satisfy $\dist_M(p,q)<r_M$, then $\dist_M(p,q)=\dist_{ F_\gamma } (p,q)$.  Then $M$ has bounded geodesic combinatorics.
\end{lem}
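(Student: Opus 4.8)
The plan is to adapt the proof of Proposition \ref{SC} almost verbatim, using the nicely embedded flat torus $F_\gamma$ in place of the nicely embedded simple closed geodesic. I would fix a geodesic $\gamma : \BR \to M$, a parameter $T > 0$, and $n \in \BN$, and set $X = \{\gamma(iT)\,\vert\, i = 0,\ldots,n\}$; the goal is to bound $|\NND(X)|$ — nearest neighbors taken in $M$ — by a constant depending only on $M$. Since $F_\gamma$ is totally geodesic, $\gamma$ is also a geodesic of $F_\gamma$, so $X$ is a sequence of equally spaced points along a geodesic of the flat torus $F_\gamma$ and hence, as noted at the start of this section, an orbit segment $\{t^i(\gamma(0))\,\vert\, i = 0,\ldots,n\}$ of a translational isometry $t$ of $F_\gamma$. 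As $F_\gamma$ has non-negative curvature, I would invoke Corollary \ref{iso2} to obtain
$$|\NND_{F_\gamma}(X)| \le 3^{\dim F_\gamma} + 1 \le 3^{\dim M} + 1,$$
where $\NND_{F_\gamma}(X)$ denotes the set of nearest-neighbor distances measured with respect to the intrinsic metric of $F_\gamma$. The crucial feature is that this bound does not depend on $\gamma$; I would use Corollary \ref{iso2} rather than Theorem \ref{iso1} precisely because the intrinsic diameters of the tori $F_\gamma$ need not be uniformly bounded, while their curvature is.

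Next I would compare the two metrics on the finite set $X$. One inequality is free: a path in $F_\gamma$ is a path in $M$ of the same length, so $\dist_{F_\gamma} \ge \dist_M$ on $F_\gamma$, and hence $\nnd_{F_\gamma}(x,X) \ge \nnd(x,X)$ for every $x \in X$. For the reverse, I would argue as in Lemma \ref{nicelyembedded}: if $x \in X$ has $\nnd(x,X) < r_M$ and $y \in X$ realizes this minimum, then $\dist_M(x,y) < r_M$, so the hypothesis forces $\dist_M(x,y) = \dist_{F_\gamma}(x,y)$, whence $\nnd_{F_\gamma}(x,X) \le \dist_{F_\gamma}(x,y) = \nnd(x,X)$. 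Thus $\nnd(x,X) = \nnd_{F_\gamma}(x,X)$ whenever $\nnd(x,X) < r_M$, and therefore at most $3^{\dim M} + 1$ of the values in $\NND(X)$ are smaller than $r_M$.

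Finally I would bound the number of values of $\NND(X)$ that are $\ge r_M$. If $x, x' \in X$ both have nearest-neighbor distance in $X$ at least $r_M$, then $\dist_M(x,x') \ge r_M$, so the set of such points is $r_M$-separated; since $M$ is compact there is a $K_M \in \BN$, depending only on $M$, bounding the cardinality of any $r_M$-separated subset of $M$, and these points contribute at most $K_M$ additional values. Altogether $|\NND(X)| \le 3^{\dim M} + 1 + K_M$, uniformly in $\gamma$, $T$, and $n$, which shows $M$ has bounded geodesic combinatorics. The only genuinely new point compared with Proposition \ref{SC} — and the one place that needs a little care, though it is more a bookkeeping subtlety than a real obstacle — is ensuring that the torus contribution is uniform over all geodesics, which is exactly what the curvature-only estimate of Corollary \ref{iso2} provides.
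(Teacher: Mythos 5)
Your proof is correct and follows essentially the same route as the paper, whose proof is literally the argument of Proposition \ref{SC} with the bounded geodesic combinatorics of flat tori replacing the Three Gap Theorem: values of $\NND(X)$ below $r_M$ are intrinsic distances in $F_\gamma$ and hence bounded in number by the torus case, while the remaining values come from an $r_M$-separated subset of $M$ of size at most $K_M$. Your one refinement --- invoking Corollary \ref{iso2} rather than Theorem \ref{iso1} so that the torus contribution $3^{\dim M}+1$ is uniform even though the intrinsic diameters of the tori $F_\gamma$ need not be --- is exactly the detail needed to make the paper's one-line proof fully precise.
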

\begin{proof}
The proof is identical to that of Proposition \ref{SC}, except that it uses the bounded geodesic combinatorics in tori instead of the Three Gap Theorem. 
\end{proof}

A useful quality of Lemma \ref{geoflat} is that if two manifolds satisfy its hypotheses, then their Riemannian product does as well.  It follows from Lemma \ref{nicelyembedded} that $SC $-manifolds satisfy the hypotheses of Lemma \ref{geoflat}.  Any compact symmetric space does as well: its maximal flats are embedded tori and any two of these differ by an isometry of the ambient manifold, \cite[Theorem 6.2]{He}, so any $r_ M $ that works for a single flat works for all flats simultaneously.  This proves the following result.

\begin{kor}
\label{products}
Finite products of $SC $-manifolds and compact symmetric spaces have bounded geodesic combinatorics.
\end{kor}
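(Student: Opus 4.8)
The plan is to package Lemma \ref{geoflat} into a property of a class of manifolds and then check that this class is large enough. Call a closed Riemannian manifold \emph{geodesically flat-tubed} if it satisfies the hypotheses of Lemma \ref{geoflat}: there is an $r_M>0$ such that every geodesic lies in an embedded totally geodesic flat torus $F_\gamma$ on which the ambient and intrinsic distances agree at scales below $r_M$. By Lemma \ref{geoflat}, every geodesically flat-tubed manifold has bounded geodesic combinatorics, so it suffices to show that every $SC$-manifold and every compact symmetric space is geodesically flat-tubed, and that the class of geodesically flat-tubed manifolds is closed under finite Riemannian products.

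First I would dispatch the two basic classes. If $M$ is an $SC$-manifold, every geodesic is a simple closed curve, hence an embedded totally geodesic circle $F_\gamma$ --- a flat $1$-torus --- and Lemma \ref{nicelyembedded} supplies precisely a uniform radius $r_M$ below which the ambient distance between two points of $F_\gamma$ equals their distance along $F_\gamma$. If $M$ is a compact symmetric space, every geodesic is contained in a maximal flat, which is an embedded totally geodesic flat torus; and since any two maximal flats are interchanged by an isometry of $M$ (\cite[Theorem 6.2]{He}), a radius $r_M$ realizing the local-isometry property for one maximal flat realizes it simultaneously for all of them. In both cases $M$ is geodesically flat-tubed. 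Note moreover that in any geodesically flat-tubed manifold every point lies in such a flat torus, since through every point there passes a (nonconstant) geodesic, whose image is contained in some $F_\gamma$.

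Next --- the only genuinely new point --- I would show the class is closed under finite products; by induction it is enough to treat $M=M_1\times M_2$ with $M_i$ geodesically flat-tubed with radius $r_{M_i}$. A complete geodesic $\gamma$ in $M$ has the form $t\mapsto(\gamma_1(c_1 t),\gamma_2(c_2 t))$ with each $\gamma_i$ a complete geodesic in $M_i$ (allowing $c_i=0$, meaning the $i$-th coordinate is constant, but not both). For each $i$ pick an embedded totally geodesic flat torus $F_i\subset M_i$ containing $\gamma_i$ (or, when $c_i=0$, containing the constant point $\gamma_i$, possible by the remark above). Then $\gamma\subset F_1\times F_2$, which is again an embedded totally geodesic flat torus in $M$, of positive dimension. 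Put $r_M=\min(r_{M_1},r_{M_2})$. If $p=(p_1,p_2)$ and $q=(q_1,q_2)$ lie in $F_1\times F_2$ with $\dist_M(p,q)<r_M$, then $\dist_{M_i}(p_i,q_i)\le\dist_M(p,q)<r_{M_i}$, hence $\dist_{M_i}(p_i,q_i)=\dist_{F_i}(p_i,q_i)$, and since squared distances add in a Riemannian product, $\dist_M(p,q)^2=\dist_{F_1}(p_1,q_1)^2+\dist_{F_2}(p_2,q_2)^2=\dist_{F_1\times F_2}(p,q)^2$. Thus $M$ is geodesically flat-tubed, and Corollary \ref{products} follows from Lemma \ref{geoflat}.

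The main obstacle I anticipate is not conceptual but a matter of care in the product step: one must correctly describe geodesics in a Riemannian product (including the degenerate cases where a factor projection is constant), verify that a product of embedded totally geodesic flat tori is again one (embeddedness, the totally geodesic property, and flatness are each preserved under products), and check that the local-isometry radius behaves well under the $\ell^2$-combination of the factor distances --- which is exactly why $r_M=\min(r_{M_1},r_{M_2})$ works. Everything else reduces to the already-established Lemmas \ref{nicelyembedded} and \ref{geoflat} together with the cited structure theory of symmetric spaces.
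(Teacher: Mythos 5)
Your proposal is correct and follows essentially the same route as the paper: verify that $SC$-manifolds (via Lemma \ref{nicelyembedded}) and compact symmetric spaces (via the maximal flats and \cite[Theorem 6.2]{He}) satisfy the hypotheses of Lemma \ref{geoflat}, and observe that those hypotheses are preserved under finite Riemannian products. The only difference is that you spell out the product-closure step (form of product geodesics, the degenerate constant-factor case, and the choice $r_M=\min(r_{M_1},r_{M_2})$), which the paper simply asserts.
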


To appreciate our approach to Corollary \ref{products}, note that it is not obvious, although probably true, that the set of manifolds with bounded geodesic combinatorics is closed under finite products. 

 \section{ Global Consequences of Bounded Geodesic Combinatorics }

This section is devoted to understanding the global behavior of geodesics in manifolds with bounded geodesic combinatorics.

\begin{lem}\label{isolated}
 Let $M$ be a closed manifold with bounded geodesic combinatorics.  Then for every geodesic $\gamma : \BR \to M $, the set of points $\gamma (\BN)$ has finitely many isolated points.
\end{lem}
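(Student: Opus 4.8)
The plan is to argue by contradiction. Suppose that for some geodesic $\gamma$ the set $Y := \gamma(\BN)$ has infinitely many isolated points; I will produce, for a suitable $n$, a set of equally spaced points $\{\gamma(0),\dots,\gamma(n)\}$ (spacing $T=1$) with more than $K$ distinct nearest neighbor distances, where $K$ is the constant witnessing that $M$ has $K$-bounded geodesic combinatorics. Note that $Y$ is then infinite, so for $y\in Y$ it makes sense to consider the quantity $d(y):=\inf\{\dist(y,w)\mid w\in Y\setminus\{y\}\}$, which is positive exactly when $y$ is isolated in $Y$.

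The crucial observation is that for every $\delta>0$ there are only finitely many isolated points $y\in Y$ with $d(y)\ge\delta$: any two such points $y\ne y'$ satisfy $\dist(y,y')\ge d(y)\ge\delta$, so they form a $\delta$-separated subset of the compact manifold $M$, which must be finite. Since there are infinitely many isolated points, it follows that infinitely many distinct values occur among the numbers $d(y)$ (otherwise every $d(y)$ would be bounded below by the least of the finitely many values that occur). I would therefore fix $J=K+1$ isolated points $y_1,\dots,y_J$ whose nearest neighbor distances $d_j:=d(y_j)$ are pairwise distinct, and choose $\eta>0$ with $\eta<\min_{j\ne j'}|d_j-d_{j'}|$.

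It remains to pass to a finite orbit segment large enough to detect these distances. Writing $y_j=\gamma(m_j)$, for each $j$ pick an index $i_j$ with $\gamma(i_j)\in Y\setminus\{y_j\}$ and $\dist(y_j,\gamma(i_j))<d_j+\eta$, which exists because $d_j$ is the infimum of such distances. Put $n=\max_j\max(m_j,i_j)$ and $X=\{\gamma(0),\dots,\gamma(n)\}$. Then $y_j\in X$ and $\gamma(i_j)\in X\setminus\{y_j\}$, so on the one hand $\nnd(y_j,X)\le\dist(y_j,\gamma(i_j))<d_j+\eta$, and on the other hand every element of $X\setminus\{y_j\}$ lies in $Y\setminus\{y_j\}$, whence $\nnd(y_j,X)\ge d_j$. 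By the choice of $\eta$ the intervals $[d_j,d_j+\eta)$ are pairwise disjoint, so $\nnd(y_1,X),\dots,\nnd(y_J,X)$ are $J=K+1$ distinct elements of $\NND(X)$. This contradicts $K$-bounded geodesic combinatorics, applied to $\gamma$ with $T=1$ and this $n$.

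I do not expect a serious obstacle here: the input from the hypothesis is only the uniform bound $K$, and the geometry of $\gamma$ plays no role beyond its being a geodesic (so that the hypothesis applies). The one genuine idea is the first step --- realizing that one should track the set of values $d(y)$ for isolated $y$ and show infinitely many of them occur --- after which everything is elementary point-set bookkeeping; the only mild care needed is that $X$ is a \emph{set} (so repeated points $\gamma(i)$ collapse, and $\nnd(y_j,X)$ must be compared to the infimum defining $d_j$ rather than to a realized minimum), which the inclusion $X\setminus\{y_j\}\subseteq Y\setminus\{y_j\}$ handles cleanly.
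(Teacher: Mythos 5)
Your proof is correct and follows essentially the same route as the paper's: both define the nearest-neighbor distance of each isolated point within the full set $\gamma(\BN)$, use compactness of $M$ (separated points / disjoint balls) to extract $K+1$ isolated points with pairwise distinct such distances, and then truncate to a finite segment, with an $\eta$ (resp.\ $\epsilon/2$) margin keeping the resulting nearest-neighbor distances in disjoint intervals, contradicting $K$-bounded geodesic combinatorics. No gaps to report.
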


\begin{proof} 
Let $A \subset \BN $ be the set of indices $i $ for which $\gamma (i ) $ is isolated, and assume that $A $ is infinite.  Set $X = \gamma (\BN) $, and for each $i \in A$, let $L_i = \dist (\gamma (i),X\setminus \gamma (i)) $.  Since $\gamma (i) $ is isolated in $X $, $L_i > 0$.  Note that the $\frac {L_i } {2} $-balls around $\gamma (i) $ are all disjoint, so the compactness of $M $ implies that $L_i \to 0 $.  In particular, if $M$ has $K $-bounded geodesic combinatorics, we can pick $I \subset A$ with $ |I | > K$ such that $L_{i },\ i \in I $ are all distinct.  Set $\epsilon = \min\{ |L_i - L_j | \ | \ i,j \in I, i \neq j\} $.  Then there are indices $n (i), \ i \in I $ with $L_i \leq \dist (\gamma (i),\gamma (n (i))) < L_i + \frac {\epsilon } {2} $.  To finish the proof, set $N = \max \big ( I \cup \{ n (i) \ | \ i \in I\} \big ) $ and $X_N =\gamma (\{0,\ldots,N\}) $.  For each $i \in I $, we have that $L_i \leq \nnd (\gamma (i),X_N) < L_i + \frac {\epsilon } {2} $.  Thus $\nnd (\gamma (i),X_N) $ are all distinct, contradicting the assumption that $M$ has $K $-bounded geodesic combinatorics. 
\end{proof}

\begin{defi*}[Asymptotic ray]
Let $C \subset M$ be a closed geodesic in a complete Riemannian manifold $M$.  A geodesic ray $\gamma:[0,\infty) \rightarrow M$ is defined to be \textit{asymptotic} to $C$ if $$\lim_{t \rightarrow \infty} \dist(\gamma (t), C)=0.$$  We say $\gamma $ is non-trivially asymptotic to $C$ if the image of $\gamma $ is not equal to $C $.
\end{defi*}

\begin{lem}\label{noasymptotic}
Let $M$ be a closed Riemannian manifold with bounded geodesic combinatorics.  Then there are no non-trivial asymptotic rays.
\end{lem}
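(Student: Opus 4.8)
The plan is to argue by contradiction and reduce everything to Lemma \ref{isolated}. Suppose $\gamma:[0,\infty)\to M$ is a geodesic ray (say unit speed) that is non-trivially asymptotic to a closed geodesic $C$, so $\dist(\gamma(t),C)\to 0$ while the image of $\gamma$ is not $C$. Since a geodesic ray whose image lies in $C$ must wrap all the way around $C$, this hypothesis is equivalent to $\gamma$ not being contained in $C$. I would sample $\gamma$ along an equally spaced sequence of times and show that the resulting set has infinitely many isolated points, contradicting Lemma \ref{isolated}; the whole difficulty is to arrange the sampling so that no sample point lands on $C$.

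The key step is to show that the crossing set $E:=\gamma^{-1}(C)\subseteq[0,\infty)$ is discrete, hence countable. Suppose not: there are $t_n\in E$ with $t_n\to t_*<\infty$ and $t_n\neq t_*$; put $q:=\gamma(t_*)\in C$. Only finitely many local branches of $C$ pass through $q$, since a periodic parametrization of $C$ is an immersion and so takes the value $q$ only finitely often. If $\gamma'(t_*)$ were tangent to one of these branches, then $\gamma$ would be the geodesic extending that branch, forcing its image to equal $C$ --- a contradiction --- so $\gamma'(t_*)$ is transverse to every branch of $C$ at $q$. But then inside a totally normal ball $U$ about $q$ the geodesic $\gamma$ meets $C\cap U=\bigcup(\text{branches})$ only at $q$, because two geodesics through $q$ with linearly independent velocities meet nowhere else in $U$; hence $E\cap(t_*-\varepsilon,t_*+\varepsilon)=\{t_*\}$, contradicting $t_n\to t_*$.

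With $E$ countable, I would fix any spacing $T>0$ and choose an offset $t_0\geq 0$ with $t_0+iT\notin E$ for all large $i$: the set of bad offsets in $[0,T)$ is contained in the countable set $\{e\bmod T:e\in E\}$, so a good $t_0$ exists. Let $\sigma(t):=\gamma(t_0+Tt)$, a geodesic, and $Y:=\sigma(\BN)=\{\gamma(t_0+iT):i\geq 0\}$. Any accumulation point $x$ of $Y$ is a limit $\gamma(t_0+i_mT)\to x$ with $i_m\to\infty$: indeed $Y$ is infinite, since a point of $Y$ revisited infinitely often would, by asymptoticity, lie on $C$ and hence be some $\gamma(t_0+iT)$ with $i$ arbitrarily large, against the choice of $t_0$. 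Therefore $\dist(x,C)=\lim\dist(\gamma(t_0+i_mT),C)=0$, so $x\in C$. Since $\gamma(t_0+iT)\notin C$ for all large $i$, no such point is an accumulation point of $Y$, so it is isolated in $Y$; as these are infinitely many distinct points of $Y$ (again because a point repeated infinitely often would lie in $C$), the set $\sigma(\BN)$ has infinitely many isolated points, contradicting Lemma \ref{isolated}.

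The step I expect to be the main obstacle is the discreteness of $E$ in the second paragraph: a priori an asymptotic ray could meet $C$ along a complicated closed set of parameters, which would wreck the pigeonhole choice of $t_0$. What rescues it is the rigidity of geodesics --- tangency to $C$ at a crossing forces $\gamma=C$, and transversality to the finitely many branches of $C$ through a point makes that crossing isolated. Everything afterward is soft point-set topology plus Lemma \ref{isolated}; the only thing to watch is the degenerate behavior of $\gamma$ revisiting a single point (or of the sampled orbit accumulating), and this is handled uniformly by the observation that any such point must lie on $C$.
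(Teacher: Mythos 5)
Your proposal is correct and follows essentially the same route as the paper: choose the spacing/offset so that the sampled points avoid $C$ (possible because $\gamma\cap C$ is at most countable), note that asymptoticity forces every accumulation point of the sampled set to lie on $C$, so all sampled points are isolated, contradicting Lemma \ref{isolated}. The only difference is that where the paper simply cites Besse for the countability of the intersections of $\gamma$ with $C$, you prove discreteness of the crossing times directly via the tangency/transversality dichotomy, which is a sound, self-contained substitute.
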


\begin{proof}
Let $\gamma:[0,\infty) \rightarrow M$ be a geodesic ray non-trivially asymptotic to a closed geodesic $C \subset M$.  Since $\gamma $ and $C $ can intersect only countably many times (see e.g. \cite[Lemma 7.10]{Be}), we can re-parameterize $\gamma $ at a different speed so that $\gamma (i) \notin C,  \ \forall i \in \BN $.  Then since $\gamma $ is asymptotic to $C $, each $\gamma (i) $ is an isolated point of $\gamma (\BN) $.  By Lemma \ref{isolated}, $M$ cannot have bounded geodesic combinatorics.
\end{proof}

\begin{lem}[Geodesic Self Intersections Are Bounded]
\label{intersections}
 Assume that $M$ is a Riemannian manifold that has $K $-bounded geodesic combinatorics.  Let $\gamma : (0, a) \to M $ be a geodesic segment and assume that the image of $\gamma $ has $n $ transverse self intersections of multiplicities $ m_1,\ldots , m_n$.  Then $$\sum_{i=1}^n m_i \leq 2K.  $$  
 \end{lem}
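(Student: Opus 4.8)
The plan is to argue by contradiction: assuming $\sum_{i=1}^n m_i\geq 2K+1$, I will produce a spacing $T>0$, a generic ``phase'' $c$, and an $N\in\BN$ for which the set $\{\gamma(c+jT)\mid j=0,\dots,N\}$ (which, after reparametrizing $\gamma$ by $c$, is of the form appearing in the definition of bounded geodesic combinatorics) has more than $K$ distinct nearest‑neighbor distances. Here $N$ is taken large enough that the sample runs over all of $(0,a)$. Say the $i$‑th self‑intersection point is $p_i$, through which $\gamma$ passes at parameters $t_{i,1}<\dots<t_{i,m_i}$ with pairwise non‑parallel velocities $v_{i,1},\dots,v_{i,m_i}$ (this is the content of ``transverse of multiplicity $m_i$''); note $m_i\geq 2$. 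The idea is that for a good choice of $T$ the sample will contain, near each $p_i$, a tight cluster of $m_i$ points — one on each of the $m_i$ transverse strands of $\gamma$ through $p_i$ — and each such cluster will be forced to contribute at least $\lceil m_i/2\rceil$ distinct nearest‑neighbor distances to $\NND(\{\gamma(c+jT)\})$.

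\textbf{Making the clusters tight and isolated.} First fix a scale $\rho>0$ smaller than the injectivity radius of $M$ along the compact set $\overline{\gamma((0,a))}$, and smaller than: the minimal distance between distinct $p_i$'s; and, for each $i$, the distance from $p_i$ to the part of the image of $\gamma$ that lies off small arcs of $\gamma$ through $p_i$ — this last quantity is positive because $p_i$ has exactly the $m_i$ preimages $t_{i,1},\dots,t_{i,m_i}$, so that the remaining part of the (compact) image misses $p_i$. Next, by simultaneous Diophantine approximation, choose $T>0$ arbitrarily small so that $\dist(t_{i,l},T\BZ)<\eta T$ for every $i,l$, where $\eta>0$ is a tiny fixed constant, and then choose $c\in(0,\eta T)$ generically. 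For each $i,l$ let $x_{i,l}$ be the sample point $\gamma(c+jT)$ with $c+jT$ closest to $t_{i,l}$; then $\dist(x_{i,l},p_i)<2\eta T$, so the $x_{i,l}$ form a cluster of diameter $O(\eta T)$ whose pairwise distances approximate the Euclidean distances among the vectors $r_{i,l}v_{i,l}\in T_{p_i}M$, with $r_{i,l}=\dist(x_{i,l},p_i)$. Because $T\ll\rho$ and $T<\inj$ along the image, for each cluster point the nearest neighbor \emph{in the whole sample} lies inside the same cluster: consecutive samples on any strand are at distance exactly $T$ in $M$; and everything off the strands through $p_i$, or in any other cluster, is at distance $\geq\rho-O(\eta T)$ — both far larger than the cluster's diameter once $\eta<1/4$.

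\textbf{Counting and conclusion.} It remains to show (i) that the cluster at $p_i$ realizes at least $\lceil m_i/2\rceil$ distinct nearest‑neighbor distances, and (ii) that, after a further generic adjustment of $c$ within the (open) admissible range, none of these finitely many distances coincide across different clusters or within a cluster beyond what is forced. Granting this,
$$K \;\geq\; \Big|\NND\big(\{\gamma(c+jT)\mid j=0,\dots,N\}\big)\Big| \;\geq\; \sum_{i=1}^{n}\Big\lceil \tfrac{m_i}{2}\Big\rceil \;\geq\; \tfrac12\sum_{i=1}^{n}m_i,$$
contradicting $\sum_{i=1}^n m_i\geq 2K+1$, which completes the proof. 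The natural route to (i) is to pair consecutive strands $(t_{i,1},t_{i,2}),(t_{i,3},t_{i,4}),\dots$, track mutual‑nearest‑neighbor pairs, and note that at an odd crossing the leftover strand still contributes one further distance.

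\textbf{Main obstacle.} The delicate point is exactly claim (i): that $m$ points lying on $m$ pairwise non‑parallel geodesic strands through a common point, in generic position, realize at least $\lceil m/2\rceil$ distinct nearest‑neighbor distances. This is not true for arbitrary point configurations (equally spaced points on a line have a single nearest‑neighbor distance), so the proof must genuinely use transversality — no two strand directions $v_{i,l}$ parallel — together with the fact that our freedom in choosing $(T,c)$, while only a one‑ or two‑parameter family, suffices to avoid the (positive‑codimension) loci where pairwise distances coincide. Verifying this, and the bookkeeping needed to satisfy all the constraints on $(T,c,N)$ for every crossing simultaneously, is where the real work lies; everything else is the soft packing argument above.
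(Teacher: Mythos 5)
Your overall counting framework matches the paper's: sample the geodesic at fine regular spacing, observe that nearest-neighbor distances are ``trivial'' (equal to the spacing) except near the self-intersections, collect roughly $\tfrac12\sum_i m_i$ nearest-neighbor pairs supported on distinct branch pairs, and conclude $\sum_i m_i\leq 2K$ once those distances are pairwise distinct. But the proposal has a genuine gap at exactly its load-bearing step, and you say so yourself: claims (i) and (ii) --- that the cluster at each intersection contributes $\lceil m_i/2\rceil$ \emph{distinct} nearest-neighbor distances and that a generic choice of $(T,c)$ separates all these distances across clusters --- are asserted, not proved. The worry you raise is precisely the problem: with only a two-parameter family of samplings at your disposal, you must show that each ``coincidence locus'' (where two of the relevant distances agree) is a proper, measure-zero subset of the parameter space; a priori the difference of two such distance functions could vanish identically in $(T,c)$, in which case no amount of genericity helps. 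Nothing in the proposal rules this out, and this is the actual content of the lemma rather than a technical afterthought.

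The paper closes exactly this gap with a computation (its Lemma \ref{derivatives}): writing the squared distance between the two sample points near a crossing as a function $f_{s,t}^2$ of the perturbations of the offset and the spacing, it computes the three second derivatives at the crossing and finds $4-4\cos\alpha$, $2s^2+2t^2-4st\cos\alpha$, and their combination, where $\alpha$ is the crossing angle. This triple determines the unordered pair of times $\{s,t\}$, so two nearest-neighbor pairs supported on different branch pairs have squared-distance functions with different $2$-jets in the perturbation parameters; hence their difference is not identically zero, hence nonzero almost everywhere nearby, and an almost-every small perturbation of $(\epsilon,\delta)$ makes all the collected distances simultaneously distinct. Some analogue of this jet computation (or another argument that transversality forces the distance functions attached to distinct branch pairs to be genuinely different functions of the sampling parameters) is what your proposal is missing. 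Two smaller remarks: your simultaneous Diophantine approximation step is unnecessary --- since the sample is $T$-dense along the curve, every passage through an intersection automatically has a sample point within $T/2$ of it, which is all one needs --- and your per-cluster formulation of (i) is stronger than required; it suffices (as in the paper) to extract one nearest-neighbor pair per unordered branch pair and separate all of these distances globally by one generic perturbation.
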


The proof of Lemma \ref{intersections} is somewhat technical, but the idea behind it is geometrically quite clear.  Assume that a geodesic $\gamma \subset M $ intersects itself and points are laid down very finely at regular intervals along $\gamma $.  Then near to each self intersection of $\gamma $ we have a pair of points on different branches of $\gamma $ that are nearest neighbors.  We will show that if our initial set of points was laid down appropriately, then the distances between these pairs of nearest neighbors will all be distinct.

\begin{proof}
Pick some small $\epsilon, \delta >0 $, and let $ X (\epsilon,\delta) = \{ \gamma (i \delta + \epsilon) | i = 0, \ldots, \left\lfloor \frac{ a }{\delta }  - \epsilon \right\rfloor. \} $  Observe that if $\delta $ is small then the distance between a pair of nearest neighbors $\gamma (i \delta + \epsilon),\gamma (j \delta +\epsilon) \in X (\epsilon,\delta) $ is exactly $\delta $ unless $ (i\delta + \epsilon,j\delta +\epsilon) $ lies very close to a pair of times $(s_{i}, t_{j }) $ describing a self intersection of $\gamma $.  Furthermore, near a multiplicity $m$ self intersection point of $\gamma $ there are at least $m$ points in $ X (\epsilon,\delta) $ that lie at a distance less than $\delta $ from some other point of $X (\epsilon,\delta) $: for example, one can take any point in $X (\epsilon,\delta) $ whose distance to the intersection is at most $\frac{\delta}{2} $.  

Let $N \subset \{0,\ldots, \left\lfloor \frac{ a }{\delta }  - \epsilon \right\rfloor\} ^2 $ be a maximal subset such that if $(i,j) \in N $ then
\begin{itemize}
\item $\dist (\gamma (i \delta + \epsilon),\gamma (j \delta + \epsilon)) < \delta $ 
\item $\dist (\gamma (i \delta + \epsilon),\gamma (j \delta + \epsilon))\in \NND(X(\epsilon,\delta))$
\item if $(k,l) \in N $ and $(k,l) \neq (i,j) $, then $\{ s_{i}, t_{ j} \} \neq \{ s_{ k}, t_{ l} \}  $.  \end{itemize}
The last condition means that no two pairs in $N $ lie on the same (unordered) pair of branches of $\gamma $ at the same self intersection.  It is not hard to see that $|N | \geq \frac{1}{2} \sum_{i=1}^n m_i.  $

Without loss of generality, we may assume that $\epsilon$ and $\delta$ have been chosen so that each element of $N$ determines a point of $X (\epsilon,\delta) $ which has a \textit{unique} nearest-neighbor.  Therefore, elements of $N$ still determine pairs of nearest-neighbors in $X (\epsilon+x,\delta(1+y))$ for all choices of $x$ and $y$ sufficiently close to zero.  Hence, the Lemma will be proved if we show that $\epsilon $ and $\delta $ can be perturbed by making appropriate choices of $x$ and $y$ so that the distances $\dist (\gamma (i(\delta+y)+ \epsilon+x),\gamma (j(\delta+y) + \epsilon+x)) , (i,j) \in N $ are all distinct.

Assume that $ s, t \in (0, a) $ and consider for small $c > 0 $ the function $f_{ s, t } : (-c,c) \times (-c,c) \to M $ defined by $$f_{ s, t } (  x  ,  y  ) = \dist (\gamma (s +  x   + s  y  ),\gamma (t +   x   + t  y  )).  $$  Note that the functions $ (f_{s, t })^2 $ vary smoothly with both $s $ and $t $.  If $ s = i \delta +\frac{\epsilon}{1+y} $ and $ t =j\delta +\frac{\epsilon}{1+y} $, then $f_{ s, t } (  x  ,  y  ) $ can be interpreted geometrically as the nearest-neighbor distance realized between the points $\gamma ( i \delta (1+y) + (\epsilon+x) )$ and $\gamma (j\delta (1+y) +(\epsilon+x)))$ from $X (\epsilon +   x  ,\delta ( 1+y))$.

\begin{lem} 
\label{derivatives}
Assume that $\gamma (  s  ) =\gamma (  t  ) = p \in M .  $  Let $\alpha $ be the angle between $\gamma' (  s  ) $ and $\gamma' (  t  ) $ in $ TM_p $.  Then
\begin{eqnarray}
\, \,  \frac{\partial^2}{\partial\xi^2}\big( f_{  s  ,  t  } (\xi, 0) \big)^2 \big|_{\xi = 0} & = & 4-4\cos (\alpha)\nonumber\\
\, \, \frac{\partial^2}{\partial\xi^2} \big( f_{  s  ,  t  }  (0,\xi) \big)^2 \big|_{\xi = 0} & = & 2   s  ^2 + 2  t  ^2 - 4  s    t  \cos (\alpha) \nonumber\\
\ \, \, \frac{\partial^2}{\partial\xi^2} \big( f_{  s  ,  t  }  (\xi,\xi) \big)^2 \big|_{\xi = 0} & = & 2   s  ^2 + 2  t  ^2 - 4  s    t  \cos (\alpha) + \nonumber\\ & & + \left( 4-4\cos (\alpha) \right) \left(1 +   s  +   t  \right). \nonumber \end{eqnarray} 
\end{lem}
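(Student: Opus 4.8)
The plan is to derive all three identities from a single computation: the Hessian, at a diagonal point, of the squared‑distance function. Let $\rho$ be the function $(q_1,q_2)\mapsto \dist(q_1,q_2)^2$, which is smooth on a neighborhood $U$ of the diagonal in $M\times M$ (any two points of a small metric ball about $p$ are joined by a unique minimizing geodesic, depending smoothly on the endpoints — the fact that $\gamma$ returns to $p$ is irrelevant, since we only ever measure distances between points very near $p$). Because $\rho$ vanishes identically on the diagonal and is everywhere non‑negative, its differential vanishes at every diagonal point $(p,p)$. The first step is to establish, for $(X_1,X_2),(Y_1,Y_2)\in T_pM\oplus T_pM$, the formula
$$\Hess_{(p,p)}\rho\big((X_1,X_2),(Y_1,Y_2)\big)=2\langle X_1-X_2,\ Y_1-Y_2\rangle_p.$$
I would prove this in geodesic normal coordinates centered at $p$: there the functions $\dist(q_1,q_2)^2$ and $|q_1-q_2|^2$ agree exactly whenever either argument equals the basepoint (since $\dist$ to the center is the Euclidean norm), which rules out any mixed quadratic discrepancy, and the first genuine correction is the quartic curvature term; hence the two functions have the same Hessian at $(p,p)$, and the Euclidean Hessian is the bilinear form displayed above (alternatively one can cite the standard Taylor expansion of the squared distance function, or run the second variation of energy against the constant geodesic).

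With this in hand the rest is bookkeeping. Taking $\gamma$ to be unit speed as in the proof of Lemma \ref{intersections}, for small $\xi$ each of the three one‑parameter families is of the form $\big(f_{s,t}(\cdot)\big)^2=\rho\circ\sigma$ for a smooth curve $\sigma:(-c,c)\to M\times M$ with $\sigma(0)=(\gamma(s),\gamma(t))=(p,p)$. Since $d\rho$ vanishes at $(p,p)$, the chain rule gives $(\rho\circ\sigma)''(0)=\Hess_{(p,p)}\rho(\sigma'(0),\sigma'(0))$, with no contribution from the acceleration of $\sigma$. The three relevant curves are $\sigma_1(\xi)=(\gamma(s+\xi),\gamma(t+\xi))$, $\sigma_2(\xi)=(\gamma(s+s\xi),\gamma(t+t\xi))$, and $\sigma_3(\xi)=(\gamma(s+\xi+s\xi),\gamma(t+\xi+t\xi))$, with velocities at $0$ equal to $(\gamma'(s),\gamma'(t))$, $(s\,\gamma'(s),t\,\gamma'(t))$, and $((1+s)\gamma'(s),(1+t)\gamma'(t))$ respectively; these correspond to the slices $f_{s,t}(\xi,0)$, $f_{s,t}(0,\xi)$, and $f_{s,t}(\xi,\xi)$.

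Finally I would substitute into the Hessian formula, using $|\gamma'(s)|=|\gamma'(t)|=1$ and $\langle\gamma'(s),\gamma'(t)\rangle=\cos\alpha$. This gives $2|\gamma'(s)-\gamma'(t)|^2=4-4\cos\alpha$; $2|s\gamma'(s)-t\gamma'(t)|^2=2s^2+2t^2-4st\cos\alpha$; and $2|(1+s)\gamma'(s)-(1+t)\gamma'(t)|^2=2\big[(1+s)^2+(1+t)^2-2(1+s)(1+t)\cos\alpha\big]$, which upon expanding and regrouping becomes $2s^2+2t^2-4st\cos\alpha+(4-4\cos\alpha)(1+s+t)$ — exactly the three claimed identities. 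The only real obstacle is the first step, the identification of $\Hess_{(p,p)}\dist^2$; once that standard fact is in place, everything else is routine differentiation.
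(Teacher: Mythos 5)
Your proposal is correct, but it takes a genuinely different route from the paper. The paper never invokes smoothness of the squared distance on $M\times M$: it observes that $f_{s,t}(0,0)=0$ and that the first derivative of $f_{s,t}^2$ vanishes, so that $\frac{\partial^2}{\partial\xi^2}\big(f_{s,t}(\xi,0)\big)^2\big|_{\xi=0}=2\big(\frac{\partial}{\partial\xi}f_{s,t}(\xi,0)\big|_{\xi=0^+}\big)^2$, and then computes this one-sided first derivative of the \emph{unsquared} distance from the expansion $\dist(\exp_p(v),\exp_p(w))=|v-w|+O(\max\{|v|,|w|\}^2)$ together with the law of cosines applied to the two lines through the origin of $T_pM$ with directions $\gamma'(s)$ and $\gamma'(t)$; only the first identity is proved explicitly, the other two being declared similar. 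You instead work with the smooth function $\rho=\dist^2$ near the diagonal, use $d\rho_{(p,p)}=0$ and the Hessian identity $\Hess_{(p,p)}\rho\big((X_1,X_2),(X_1,X_2)\big)=2|X_1-X_2|^2$, and then a single chain-rule computation treats all three slices uniformly. Your identification of the three velocity vectors $(\gamma'(s),\gamma'(t))$, $(s\gamma'(s),t\gamma'(t))$, $((1+s)\gamma'(s),(1+t)\gamma'(t))$ and the ensuing algebra (including the regrouping that produces the $(4-4\cos\alpha)(1+s+t)$ term) are correct, so your version is arguably cleaner and avoids the non-differentiability of $\dist$ at the diagonal that the paper sidesteps with one-sided derivatives.

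One caveat: your normal-coordinate justification of the Hessian identity is misstated. Knowing that $\dist(q_1,q_2)^2$ and $|q_1-q_2|^2$ agree when either argument equals $p$ kills the two \emph{pure} second-derivative blocks of the difference, not the mixed one (the function $h(x,y)=xy$ vanishes on both axes but has nonzero mixed Hessian), so this alone does not rule out a "mixed quadratic discrepancy." To finish along these lines you should also use that the difference vanishes on the diagonal and is symmetric under swapping the two factors, which kills the symmetric and antisymmetric parts of the cross block respectively; alternatively, either of the fallbacks you mention (the standard second-order Taylor expansion of $\dist^2$ near the diagonal, or the second variation of energy at the constant geodesic, where the Jacobi field is the linear interpolation of the endpoint vectors) is a sound and standard justification. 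With that point repaired, the proof is complete.
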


\begin{proof}
We will prove the first equality, since the others are proven similarly.  Observe that $f_{  s  ,  t  }  (0, 0) $ and $ \frac{\partial}{\partial\xi} \big( f_{  s  ,  t  } (\xi, 0) \big)^2 |_{\xi= 0} $ both vanish - the latter does because $f_{  s  ,  t  }  (\xi,0) \leq 2\xi $ by the triangle inequality.  It follows that
\begin{eqnarray*}
 \frac{\partial^2}{\partial\xi^2}\big( f_{  s  ,  t  }  (\xi, 0) \big)^2 \big|_{\xi = 0} & = & 2\lim_{\xi \to 0} \frac{(f_{  s  ,  t  }  (\xi))^2}{\xi ^2} \\ &  = & 2\big( \frac{\partial}{\partial\xi} f_{  s  ,  t  }  (\xi, 0) \big |_{\xi = 0^+ } \big)^2.  
\end{eqnarray*}
Now for any $v,w \in  TM_p $ we have $$\dist (\exp_p (v),\exp_p (w)) =  |v -w |  + O ( \max\{ |v |, | w |\}^2 ).  $$  Therefore, $$\frac{\partial}{\partial\xi} f_{  s  ,  t  } (\xi, 0) \big |_{\xi = 0^+ } = \frac{\partial}{\partial\xi}\Big| \exp^{ -1} (\gamma (  s  + \xi)) - \exp^{ -1} (\gamma (  t  +\xi)) \Big |_{\xi = 0^+ }.  $$  Since $\exp^{ -1} (\gamma (  s  + \cdot)) $ and $\exp^{ -1} (\gamma (  t  +\cdot)) $ are simply two lines through the origin in $TM_p $ intersecting with angle $\alpha $, this can be calculated directly using the law of cosines.
\end{proof}

The point of Lemma \ref{derivatives} is that the triples $$D (s, t) = \frac{\partial^2}{\partial\xi^2}\Big (f_{s,  t} (\xi, 0)^2, f_{ s,  t }  (0,\xi) ^2 , f_{  s,  t }  (\xi,\xi) ^ 2\Big ) \Big|_{\xi = 0}  $$ determine the set $\{ s, t\} $ when $\gamma (s) =\gamma (t) $.

We now return to our discussion of $N $.  Recall that by choosing $\delta $ small, we can ensure that if $(i,j) \in N $ then $ (i \delta + \epsilon, j\delta +\epsilon ) $ is arbitrarily close to some pair of times $ (s_i, t_j) $ with $\gamma (s_i) =\gamma (t_j) $.  Also, no two elements of $N $ give the same unordered pair $(s_i, t_j) $.  So, by Lemma \ref{derivatives}, the triples $D (s_i, t_j) $ are distinct for $(i, j) \in N $.  If $\delta $ is chosen small enough, the triples $D (i\delta +\epsilon,j\delta +\epsilon) $ will approximate $D (s_i, t_j) $ very closely, and therefore will be distinct as well.

The terms of $D (i\delta +\epsilon, j\delta +\epsilon) $ are second (directional) derivatives of the function $ (\epsilon,\delta) \mapsto\dist (\gamma (i\delta +\epsilon),\gamma (j\delta +\epsilon)) $.  If two real valued functions differ on some (even higher-order) partial derivative at a point, there is a neighborhood of that point on which the two functions disagree almost everywhere.  Therefore, for almost every small perturbation of $\epsilon $ and $\delta $, the distances $\dist (\gamma (i\delta +\epsilon),\gamma (j\delta +\epsilon)) $ will be distinct for $(i,j) \in N $.  These constitute at least $\frac{1}{2} \sum_{i=1}^n m_i $ nearest neighbor distances in $X (\epsilon,\delta) $, so the Lemma follows.\end{proof}

The final lemma applies only to surfaces, utilizing that geodesics locally separate space:

\begin{lem} 
\label{simpleorclosed}
Let $M$ be a closed surface with bounded geodesic combinatorics.  Then every non-closed geodesic in $M$ is simple and accumulates on itself in $TM $.
\end{lem}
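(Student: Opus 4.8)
The plan is to combine the two finiteness results already established — Lemma~\ref{intersections}, which bounds the self-intersections of any geodesic in $M$, and Lemma~\ref{isolated}, which bounds the isolated points of $\gamma(\BN)$ — together with the fact that in the surface $M$ a geodesic arc separates a small disk. Since reversing the orientation of $\gamma$ affects neither the hypothesis nor the conclusion, I will feel free to replace $\gamma$ by $t\mapsto\gamma(-t)$, and I take $\gamma$ unit speed.

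First I would record that every self-intersection of a non-closed geodesic is transverse: if $\gamma(s)=\gamma(t)$ with $s\neq t$ and $\dot\gamma(s)=\dot\gamma(t)$, then uniqueness of geodesics makes $\gamma$ periodic, while $\dot\gamma(s)=-\dot\gamma(t)$ forces $\dot\gamma$ to vanish at $\tfrac{s+t}{2}$ (differentiate $\gamma(s+u)=\gamma(t-u)$), both impossible. Hence Lemma~\ref{intersections}, applied to the segments $\gamma|_{(-n,n)}$ with $n\to\infty$, shows $\gamma$ has at most $2K$ self-intersections in all. Next I would check that $\gamma(\BN)$ — or, after possibly rescaling the parameter (which affects neither the image of $\gamma$ nor the statement), $\gamma(\sqrt2\,\BN)$ — is infinite: otherwise $\gamma$ would be constant on $\BN\cup\sqrt2\,\BN$, which by Dirichlet's theorem contains pairs $m\neq\sqrt2\,n$ arbitrarily close together, and a pigeonhole argument would then produce a geodesic loop of length tending to $0$, contradicting $\inj(M)>0$. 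So, reparametrizing if needed, $\gamma(\BN)$ is infinite, and Lemma~\ref{isolated} lets me choose a non-isolated point $\gamma(i_0)$ of $\gamma(\BN)$ with $i_0$ larger than every parameter occurring in a self-intersection of $\gamma$. Then there are integers $j_k\to\infty$ with $\gamma(j_k)\to\gamma(i_0)$ and $\gamma(j_k)\neq\gamma(i_0)$ (they must go to infinity, else the $\gamma(j_k)$ would take finitely many values, all bounded away from $\gamma(i_0)$), and, passing to a subsequence, $\dot\gamma(j_k)\to w$ for some unit vector $w$ at $\gamma(i_0)$.

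The crux is a dichotomy on $w$. If $w\neq\pm\dot\gamma(i_0)$, then for large $k$ the geodesic arc $u\mapsto\gamma(j_k+u)$, being $C^1$-close to the transverse arc $u\mapsto\exp_{\gamma(i_0)}(uw)$, must cross the separating arc $\gamma|_{[i_0-\delta,\,i_0+\delta]}$; this gives a transverse self-intersection of $\gamma$ at a parameter near $j_k$, hence infinitely many, contradicting Lemma~\ref{intersections}. Therefore $w=\pm\dot\gamma(i_0)$, and continuous dependence of geodesics on their initial data yields $(\gamma(j_k),\dot\gamma(j_k))\to(\gamma(i_0),\pm\dot\gamma(i_0))$ with $j_k\to\infty$; in particular the orbit of $\gamma$ lies in its own $\omega$-limit set in $TM$ (immediately from flow-invariance of the $\omega$-limit set when the sign is $+$; the $-$ case follows by a short argument using the orientation-reversing involution of $TM$), so $\gamma$ accumulates on itself in $TM$. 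Finally, if $\gamma$ had a self-intersection, say at times $a<b<i_0$, then the geodesic segments $u\mapsto\gamma(j_k\pm u)$, $u\in[a-i_0,\,b-i_0]$, would converge in $C^\infty$ to the segment $\gamma|_{[a,b]}$, which crosses itself transversally at its two endpoints; stability of transverse crossings under $C^1$-perturbation would then force each nearby segment to self-cross near those endpoints, producing self-intersections of $\gamma$ at parameters tending to $\infty$ — again contradicting Lemma~\ref{intersections}. Hence $\gamma$ is simple.

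The step I expect to be the main obstacle is making the two ``stable crossing'' arguments precise: deducing an honest transverse intersection (with the fixed separating arc in one instance, a genuine self-crossing in the other) from $C^1$-closeness of geodesic arcs requires a careful local computation in normal coordinates — essentially an application of the inverse function theorem to the relevant difference map — together with the bookkeeping needed to see that the crossings obtained for different $k$ are genuinely distinct self-intersections of $\gamma$ (which holds because the parameters involved tend to $\infty$). The verification that $\gamma(\BN)$, or $\gamma(\sqrt2\,\BN)$, is infinite, while elementary, is another point to pin down; and it is precisely in the crossing arguments — via the separation of a disk by a geodesic arc — that the hypothesis that $M$ is a surface enters.
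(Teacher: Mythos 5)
Your proposal is correct and follows essentially the same route as the paper's proof: Lemma \ref{isolated} supplies returns $\gamma(j_k)\to\gamma(i_0)$ with $j_k\to\infty$, Lemma \ref{intersections} forces the accumulating directions to be $\pm\dot\gamma(i_0)$ (hence accumulation in $TM$), and the same finiteness of self-intersections shows a self-crossing would be replicated along the returns, giving simplicity. You merely fill in details the paper leaves implicit (transversality of self-intersections of a non-closed geodesic, infiniteness of $\gamma(\BN)$, and the sign dichotomy $\pm\dot\gamma(i_0)$).
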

\begin{proof}
Assume that $\gamma \subset M $ is a non-closed geodesic.  As $M$ has bounded geodesic combinatorics, Lemma \ref{isolated} provides a sequence of times $t_n\to \infty $ such that $\gamma (t_n) $ converges to $\gamma (t) $ for some $t \in \BR $.  Moreover, $\gamma $ cannot intersect itself infinitely many times by Lemma \ref{intersections}, so $\gamma '(t_n) $ converges to $\gamma' (t) $ in $TM$. Thus $\gamma $ accumulates on itself in $TM $. Finally, by a continuity argument we see that for arbitrary $s \in \BR $, $\gamma' (t_n + (s - t)) \to \gamma' (s) $ in $TM $.  This implies that $\gamma $ must be simple, for otherwise any self intersection of $\gamma $ will be accompanied by infinitely many other self intersections, which again contradicts Lemma \ref{intersections}.
\end{proof}

\section{Surfaces with Bounded Geodesic Combinatorics}
In this section, we prove Theorem \ref{smooththeorem}. Before giving the proof, we record the following two results about surfaces:  

\begin{sat}[Ga\u{i}dukov, \cite{Ga}]\label{ray}
Let $M^2$ be a closed, oriented surface of positive genus, $p \in M$, and $\Gamma $ a non-trivial free homotopy class of closed cuves in $M$.  Then there is closed geodesic $\gamma \in \Gamma$ and a ray $r:[0,\infty) \rightarrow M$ starting at $p$ that is asymptotic to $\gamma$.  Moreover, any cover of $r$ and any cover of $\gamma$ in the universal cover $\tilde{M}$ is a globally minimizing geodesic.
\end{sat}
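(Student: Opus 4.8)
The plan is to produce a length-minimizing closed geodesic in $\Gamma$ by compactness and then run the classical ``winding geodesics'' argument of Morse and Hedlund inside the infinite cyclic cover of $M$ determined by $\Gamma$. Since $M$ is a closed orientable surface of positive genus, $\pi_1(M)$ is infinite and torsion free, so a representative $g$ of the conjugacy class $\Gamma$ has infinite order; minimizing length over all loops freely homotopic to $\Gamma$ -- the infimum is positive because $M$ is compact, and is attained by Arzel\`a--Ascoli on unit-speed loops of bounded length -- yields a closed geodesic $\gamma \in \Gamma$ of length $\ell$, which I may take primitive, since passing to a primitive root of $\Gamma$ alters neither the image of $\gamma$ nor the notion of being asymptotic to it. If $M$ is a flat torus the conclusion is elementary, since a subray of the shortest geodesic loop at $p$ in $\Gamma$ already works, so I assume otherwise. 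Let $\hat M = \tilde M / \langle g \rangle$; then $\hat M$ is a complete open annulus, $\pi_1(\hat M) \cong \BZ$ is generated by the deck transformation $g$ of $\tilde M \to \hat M$, a lift $\tilde\gamma$ of $\gamma$ invariant under $g$ descends to a closed geodesic $\hat\gamma \subset \hat M$ of length $\ell$ generating $\pi_1(\hat M)$, and both covering maps $\tilde M \to \hat M \to M$ are local isometries; write $\pi : \hat M \to M$ for the latter.

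Fix a lift $\tilde p$ of $p$ with image $\hat p \in \hat M$; if $\hat p \in \hat\gamma$ a subray of $\hat\gamma$ works, so assume not, and set $d_0 = \dist_{\tilde M}(\tilde p,\tilde\gamma)$. For each $n$ let $\tilde\sigma_n$ be a minimizing geodesic in $\tilde M$ from $\tilde p$ to $g^n\tilde p$ (it exists by Hopf--Rinow); routing a comparison path from $\tilde p$ to the nearest point of $\tilde\gamma$, then $n$ periods along $\tilde\gamma$, then back shows $\length(\tilde\sigma_n) \le 2d_0 + n\ell$, and since $\tilde p$ and $g^n\tilde p$ have the same image in $\hat M$, the projection $\sigma_n$ of $\tilde\sigma_n$ is a loop based at $\hat p$ winding $n$ times around the annulus, still of length $\le 2d_0 + n\ell$. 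The crucial input is a monotonicity estimate on $\hat M$: a curve winding once around the annulus at distance $\ge R$ from $\hat\gamma$ has length at least $\ell_R$, where $\ell_R \ge \ell$ -- it projects to a loop in $\Gamma^{\pm 1}$ -- and $\ell_R > \ell$ once $R$ exceeds the width $w$ of the, possibly degenerate, maximal flat cylinder $\CZ$ of length-$\ell$ closed geodesics through $\hat\gamma$ (finite since $M$ is not a flat torus); consequently, for each $R > w$, at most $2d_0/(\ell_R - \ell)$ of the $n$ winds of $\sigma_n$ can lie entirely outside the $R$-neighborhood of $\hat\gamma$, uniformly in $n$. Taking a subsequential limit $v$ of the initial velocities of the $\tilde\sigma_n$, letting $\tilde r$ be the ray with $\tilde r'(0)=v$ and $\hat r$, $r = \pi\circ\hat r$ its images in $\hat M$ and $M$, the first $m$ winds of $\sigma_n$ converge to those of $\hat r$, so $\hat r$ has only finitely many winds outside each $R$-neighborhood of $\hat\gamma$; since $\hat r$ winds infinitely, $\limsup_{t\to\infty}\dist_{\hat M}(\hat r(t),\hat\gamma)\le w$, i.e. $\dist_{\hat M}(\hat r(t),\CZ)\to 0$. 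A further limiting argument -- an $\omega$-limit geodesic inside $\CZ$, or the first-return map to a transversal -- singles out a closed geodesic $\hat\gamma' \subset \CZ$ of length $\ell$ onto which $\hat r$ spirals; with $\gamma := \pi\circ\hat\gamma' \in \Gamma$ we get $\dist_M(r(t),\gamma)\le \dist_{\hat M}(\hat r(t),\hat\gamma')\to 0$, so $r$ is a ray from $p$ asymptotic to $\gamma$.

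For the ``moreover'', $\tilde\sigma_n$ is by construction a minimizing geodesic segment of $\tilde M$ with $\length(\tilde\sigma_n)\to\infty$, so $\tilde r$, a limit of minimizing segments of unbounded length, is a globally minimizing ray. The lift $\tilde\gamma'$ of $\gamma$ onto which $\tilde r$ spirals is invariant under (a conjugate of) $g$ and satisfies $\dist_{\tilde M}(\tilde r(t),\tilde\gamma')\to 0$, and comparing arc length along $\tilde\gamma'$ with $\tilde M$-distance forces $\tilde\gamma'$ to be a minimizing line. Finally, every cover of $r$, resp.\ of $\gamma$, in $\tilde M$ is a deck-group translate of $\tilde r$, resp.\ of $\tilde\gamma'$, hence is likewise globally minimizing.

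\textbf{Main obstacle.} The technical heart is the annulus estimate together with its degenerate case: showing that winding once around $\hat M$ at distance $\ge R$ from $\hat\gamma$ strictly costs more than $\ell$ off the minimal set, with enough quantitative control to bound the far winds of $\sigma_n$ uniformly in $n$, and then upgrading ``$\hat r$ is asymptotic to $\CZ$'' to ``$\hat r$ is asymptotic to a single closed geodesic'' when $\CZ$ is a genuine flat cylinder. This is exactly where the noncompactness of $\hat M$ must be controlled using the bounded geometry it inherits from the compact surface $M$.
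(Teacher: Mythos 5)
Note first that the paper does not prove this statement at all: it is quoted, with attribution, from Ga\u{i}dukov \cite{Ga}, so there is no internal proof to compare yours against. Your proposal is the classical Morse--Hedlund--type construction (minimize in the class, pass to the annular cover $\hat M=\tilde M/\langle g\rangle$, take limits of minimizing segments from $\tilde p$ to $g^n\tilde p$), which is the right circle of ideas, and your treatment of the ``moreover'' clause (a limit of minimizing segments is a minimizing ray; equivariance under $g$ plus asymptoticity forces the invariant lift of the limit closed geodesic to be a minimizing line, and all other lifts are deck translates) is sound. But the core of the argument has genuine gaps.

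The key quantitative step -- that a once-winding curve in $\hat M$ at distance $\geq R$ from $\hat\gamma$ has length $\ell_R>\ell$ as soon as $R$ exceeds the width $w$ of the maximal flat cylinder of minimizers through $\hat\gamma$ -- is false as stated. The set of length-$\ell$ closed geodesics in the class need not be connected, so there can be once-winding geodesics of length exactly $\ell$ at distance greater than $w$ (isolated minimizers elsewhere in the class, with $w=0$); and, much worse, when $M$ is a torus the closed lifts $h^k\hat\gamma$ of $\gamma$ under the second deck transformation $h$ of $\hat M\to M$ are length-$\ell$ once-winding geodesics escaping both ends of the annulus, so $\ell_R=\ell$ for every $R$ and your bound $2d_0/(\ell_R-\ell)$ on the number of far winds is vacuous. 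You dispose only of the \emph{flat} torus, but the non-flat torus is exactly the case your estimate cannot see; there one needs Hedlund's theorem (minimal geodesics on a $2$-torus stay within bounded distance of straight lines) or an equivalent substitute. In genus $\geq 2$ a confinement estimate is true, but for a different reason and with a different threshold: any once-winding closed curve of length $\leq \ell+1$ lifts to a uniform $g$-invariant quasi-geodesic, hence by the Morse lemma lies within a bounded neighborhood of $\hat\gamma$; the flat-cylinder width is not the relevant quantity. Finally, even granting a corrected estimate, what you obtain is that the limit ray accumulates on the \emph{set} of class-minimizers; upgrading this to asymptoticity to a single closed geodesic -- which you defer to ``a further limiting argument'' and yourself flag as the main obstacle -- is precisely the substantive content of Ga\u{i}dukov's theorem (e.g.\ via the argument that the ray crosses each closed lift of a minimizer at most once, so it is eventually trapped in one complementary annulus and spirals monotonically onto a boundary geodesic), not a routine afterthought. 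As it stands the proposal is an outline of the known strategy with the two hardest steps -- the torus case and the single-limit-geodesic step -- missing.
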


\comment {\begin{rmk*} 
When $M^k$ is a closed and non-positively curved Riemannian $k$-manifold, the same conclusion as in Theorem \ref{ray} holds.  This is a standard consequence of the convexity of distance functions in the universal covering.
\end{rmk*} }

\begin{sat}[Innami, \cite{In}]\label{foliation}
Let $M^2$ be a closed, oriented surface of positive genus.  Suppose that for each non-trivial free homotopy class of closed curves $\Gamma$ in $M$, there is a foliation of $M$ by geodesics all belonging to the class $\Gamma$.  Then $M$ is a flat torus.
\end{sat}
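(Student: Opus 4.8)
The plan is to reduce to E.~Hopf's rigidity theorem — a metric on the $2$-torus without conjugate points is flat — in three moves: identify $M$ topologically, show the given foliations consist of minimizing geodesics, and then bootstrap to the absence of conjugate points. For the first move, a foliation equips $M$ with a line field, so $\chi(M)=0$; since $M$ is closed, oriented and of positive genus this forces $M\cong\BT^{2}$. Moreover a geodesic representing a free homotopy class of closed curves is a closed geodesic, and a leaf of a foliation is embedded, so every leaf is a \emph{simple} closed geodesic; in particular the classes in the hypothesis are the primitive ones, and for primitive $\alpha$ I write $\CF_{\alpha}$ for the given foliation of $M$ by closed geodesics in class $\alpha$. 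Fix the universal cover $\wt M=\BR^{2}$, with deck group $\cong\BZ^{2}$, and let $T_{\alpha}$ be the deck translation corresponding to $\alpha$.

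The second move: I would show every leaf of $\CF_{\alpha}$ lifts to a \emph{globally minimizing} geodesic of $\wt M$. Cutting $M$ along one leaf of $\CF_{\alpha}$ gives an annulus foliated by circles isotopic to its core, a topologically standard picture; hence $\CF_{\alpha}$ is topologically conjugate to a linear foliation, its lift $\wt{\CF}_{\alpha}$ is a foliation of $\wt M$ by properly embedded lines, each $T_{\alpha}$-invariant, and the leaf space is $\BR$, with continuous leaf coordinate $y:\wt M\to\BR$. If a leaf $\ell=\{y=0\}$ were not minimizing, choose a minimizing geodesic $\mu$ joining two points of $\ell$ and leaving it; then $h=y\circ\mu$ is continuous on a compact interval, vanishes at the endpoints, and is not identically zero, so it attains an interior extremum $c^{*}\neq0$ at some $\mu(u^{*})$ lying on the leaf $\ell'=\{y=c^{*}\}\neq\ell$. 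Since $\mu$ stays on one closed side of $\ell'$ and touches $\ell'$ at the interior point $\mu(u^{*})$, it is tangent to $\ell'$ there, so (tangent geodesics coincide) $\mu\subset\ell'$ — contradicting that the endpoints of $\mu$ lie on $\ell\neq\ell'$. Thus the leaves of $\wt{\CF}_{\alpha}$ are globally minimizing (``class $\CA$'' geodesics in the sense of Morse and Hedlund) and they foliate $\wt M$; equivalently, for every rational rotation direction the minimal geodesics with that direction foliate $M$ without gaps.

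The third move, and the main obstacle. Leaves of $\CF_{\alpha}$ and $\CF_{\beta}$ with $\alpha\neq\beta$ cannot be tangent where they meet, so minimal geodesics of distinct rotation numbers through a common point have distinct tangent directions there. I would then invoke the theory of minimal geodesics on the $2$-torus (Morse, Hedlund, Bangert): a $C^{1}$-limit of minimal geodesics is minimal, the lamination $\CM_{\rho}$ of minimal geodesics of rotation number $\rho$ depends upper semicontinuously on $\rho$, and a gap of $\CM_{\rho}$ for irrational $\rho$ would propagate to a gap of $\CM_{\rho'}$ for rational $\rho'$ close to $\rho$ — which the second move rules out. Hence $\CM_{\rho}$ foliates $M$ for every $\rho\in\BR\BP^{1}$; since $\rho\mapsto(\text{tangent direction at }p\text{ of the leaf of }\CM_{\rho}\text{ through }p)$ is then a continuous injection $\BR\BP^{1}\to\BR\BP^{1}$, it is onto, so every direction at every point is tangent to a minimal geodesic. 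Therefore every geodesic of $\wt M$ is minimizing, $M$ has no conjugate points, and by Hopf's theorem $M$ is a flat torus. I expect the delicate point to be exactly the irrational step above — excluding Denjoy-type gaps in $\CM_{\rho}$ by transporting information from the rational foliations — which is where the dynamics truly enters; the second move (minimality of the leaves) is the essential input driving it, while the topological reduction and the appeal to Hopf's theorem are the soft bookends.
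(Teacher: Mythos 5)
This statement is not proved in the paper at all: it is quoted verbatim from Innami \cite{In} and used as a black box, so there is no internal proof to compare against. Judged on its own, your outline is essentially the standard route to such results and, as far as I can tell, it is sound: the Euler characteristic step, the identification of the leaves as simple closed geodesics, and the maximum-principle argument showing each leaf lifts to a globally minimizing line (a transverse touching of the minimizer $\mu$ with a leaf $\ell'$ at an interior extremum of the leaf coordinate forces tangency, hence $\mu\subset\ell'$, a contradiction) are all correct. The real content, as you say, is the third move, and there you are delegating to the Morse--Hedlund--Bangert structure theory of minimal geodesics on $\BT^2$ roughly as much as the paper delegates to Innami: you need (a) that limits of minimizers are minimizers and that the asymptotic direction varies continuously under such limits, so that the rational foliations force, through every point, a minimizer of each irrational rotation number; (b) that minimizers of a fixed irrational rotation number do not cross, so that full support upgrades to a foliation; and (c) the continuity (not just injectivity) of the map $\rho\mapsto$ (direction at $p$ of the leaf of $\CM_{\rho}$ through $p$), which requires uniqueness of that leaf and a semicontinuity argument before you can conclude surjectivity onto $\BR\BP^1$ and hence that every geodesic is minimal. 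Each of these is a genuine theorem rather than a formality, and (c) in particular deserves an explicit argument. Granting those inputs, the reduction to E.~Hopf's rigidity theorem completes the proof, and your sketch is faithful to how Innami's theorem is actually proved in the literature; it is a correct proof \emph{modulo} the cited structure theory, not a self-contained one.
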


\vskip 5pt

We now begin the proof of Theorem \ref{smooththeorem}.

\begin{theoremthree}[Rigidity in Dimension 2]
 Assume that $M$ is a closed Riemannian surface with bounded geodesic combinatorics.  Then $M$ is either
\begin{itemize}
\item $\BT^2 $ with a flat metric,
\item homeomorphic to $\BS^2 $ or $\BR \BP^2 $.
\end{itemize}
In the latter case, if $M$ is real-analytic or positively curved then it is either $\BR\BP^2 $ with a round metric or $\BS^2 $ with an $SC$-metric.
\end{theoremthree}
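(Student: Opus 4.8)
The plan is to first pin down the topology of $M$ and then, under the additional hypothesis, upgrade to the metric conclusions on $\BS^2$ and $\BR\BP^2$. A preliminary remark I would establish is that bounded geodesic combinatorics is inherited by finite covers: if $\pi:\hat M\to M$ is a finite covering, $\rho$ is a lower bound for the radii of evenly covered balls, and $\hat\gamma$ is a geodesic in $\hat M$ with equally spaced points $\hat\gamma(iT)$, then for any pair at $\hat M$-distance $<\rho$ one has $\dist_{\hat M}=\dist_M$, and those indices $i$ whose nearest neighbor distance exceeds $\rho$ form a $\rho$-separated set in the compact manifold $\hat M$, hence are boundedly many; so the number of distinct nearest neighbor distances in $\hat M$ is at most $K$ plus a packing number of $\hat M$. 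This is the same bookkeeping used in the proof of Proposition \ref{SC}.

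Now suppose $M$ is orientable. If $M$ has positive genus, then for any point $p$ and any nontrivial free homotopy class $\Gamma$, Ga\u{i}dukov's Theorem \ref{ray} produces a closed geodesic in $\Gamma$ together with a ray from $p$ asymptotic to it; by Lemma \ref{noasymptotic} this ray is trivial, so its image is the closed geodesic and $p$ lies on it. Moreover, by the "minimizing" clause of Theorem \ref{ray}, such closed geodesics lift to globally minimizing lines in $\tilde M$, and two distinct minimizing closed geodesics in a fixed primitive class cannot cross — a transverse crossing would let one splice together a shorter minimizing broken geodesic, which is impossible. Hence the closed geodesics in a fixed primitive class are pairwise disjoint and fill $M$, giving a foliation of $M$ by circles; this already forces $\chi(M)=0$, so genus $1$, and since (traversing leaves multiply) the same reasoning supplies such a foliation for every nontrivial class, Innami's Theorem \ref{foliation} forces $M$ to be a flat $\BT^2$. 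If $M$ is non-orientable, pass to the orientation double cover $\hat M$, orientable with $\chi(\hat M)=2\chi(M)$ and, by the first paragraph, still of bounded geodesic combinatorics. When $\chi(M)<0$ this $\hat M$ has genus $\ge 2$, contradicting the orientable case; when $M$ is a Klein bottle, $\hat M$ is a flat $\BT^2$ and hence $M=\BT^2/\langle\sigma\rangle$ is a flat Klein bottle — but then a non-closed geodesic of $\BT^2$ whose direction is moved by $\sigma$ projects to a geodesic of $M$ with infinitely many transverse self-intersections (it and its $\sigma$-translate are dense geodesics of distinct slopes, meeting infinitely often), contradicting Lemma \ref{intersections}. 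This leaves precisely $\BT^2$ with a flat metric, $\BS^2$, and $\BR\BP^2$.

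For the refinement, assume $M\cong\BS^2$ or $\BR\BP^2$ and in addition $M$ is real-analytic or positively curved; the plan is to show every geodesic of $M$ is closed, after which $M$ (or, in the $\BR\BP^2$ case, its double cover $\BS^2$) is a Besse surface, and one concludes by classical rigidity — Gromoll--Grove (cf. Besse's book) shows a Besse $2$-sphere carries an $SC$-metric, and the two-dimensional Blaschke rigidity shows a Besse $\BR\BP^2$ is round. To prove all geodesics close up, suppose $\gamma$ is non-closed. By Lemma \ref{simpleorclosed} it is simple and accumulates on itself in $TM$, with $\gamma'(t_n+s)\to\gamma'(s)$ uniformly on compacta, so its closure in $T^1M$ is a minimal set which is neither a closed orbit nor all of $T^1M$ (the geodesic flow of $\BS^2$ or $\BR\BP^2$ is never minimal); it therefore projects to a minimal geodesic lamination of $M$ with no closed leaves, none spiraling onto a closed geodesic by Lemma \ref{noasymptotic}. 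Here I would invoke the extra hypothesis: in the positively curved case, focusing of geodesics forces the nearly parallel returning strands of $\gamma$ at a recurrence eventually to cross, contradicting simplicity; in the real-analytic case one first rules out $\gamma$ being dense, leaving a genuine exceptional minimal lamination, and then observes that $\BS^2$ and $\BR\BP^2$ admit no such lamination (its carrying train track cannot embed there, by an Euler characteristic count). Either way we reach a contradiction, so $M$ has all geodesics closed, finishing the proof.

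The main obstacle is this last step — excluding non-closed geodesics on $\BS^2$ and $\BR\BP^2$ using analyticity or positive curvature. Lemmas \ref{simpleorclosed}, \ref{noasymptotic}, and \ref{intersections} reduce it to ruling out an exceptional minimal geodesic lamination (and, in the analytic case, a dense simple geodesic), but converting the inequality $\chi(M)>0$ into a genuine obstruction, and handling the density alternative, is where the real work lies; the Besse-rigidity input (Gromoll--Grove, and the Blaschke conjecture in dimension two) is then standard.
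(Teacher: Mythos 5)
Your topological half follows the paper's route (Ga\u{i}dukov rays plus Lemma \ref{noasymptotic} give a closed geodesic through every point, the minimizing property kills bigons so these geodesics foliate, Innami gives flatness, and Lemma \ref{intersections} kills the flat Klein bottle), but the serious gap is in the refinement, precisely where you acknowledge ``the real work lies.'' In the real-analytic case you propose to rule out a dense simple geodesic (with no argument offered) and then to exclude an ``exceptional minimal lamination'' by a train-track Euler characteristic count. This does not work as stated: for a general Riemannian metric the closure in $M$ of a simple non-closed geodesic need not be a geodesic lamination carried by a train track (leaves need not be proper, the closure can have interior), and nothing in your sketch uses real-analyticity -- if it were correct it would prove the smooth case, which the authors explicitly say they cannot do. The paper's actual argument is quite different: it fixes $p$, shows via a Jacobi-field lemma (Lemma \ref{conjugateimpliesintersection}) combined with Lemma \ref{simpleorclosed} that $\gamma_v$ is closed for every $v$ in the set $U\subset SM_p$ of directions with a conjugate point, invokes Warner's theorem to see $U$ is open and nonempty, proves that on compact subintervals of a maximal interval $I\subset U$ the closed geodesics $\gamma_v$ have a common period (Lemma \ref{commonperiod}: least periods are bounded via the self-intersection bound, a period function is built locally, shown continuous and then smooth, and forced constant by the first variation formula), and finally uses real-analyticity of $v\mapsto \dist_{TM}^2\bigl([\gamma_v(L),\gamma_v'(L)],[p,\gamma_v'(0)]\bigr)$ to propagate the period $L$ to the endpoints of $I$, producing a conjugate point there and contradicting maximality. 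None of this machinery appears in your proposal. Your positively curved case is in the right spirit, but ``focusing forces the returning strands to cross'' is exactly the content of Lemma \ref{conjugateimpliesintersection} (plus Rauch), a genuine argument about the sign changes of a Jacobi field, not something you may assert.

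A secondary gap: your preliminary remark that bounded geodesic combinatorics passes to finite covers is not justified by the bookkeeping you give. If $\nnd_{\hat M}(\hat\gamma(iT))<\rho$, then this value does equal a distance $\dist_M$ between the projected points, but it need not be a \emph{nearest neighbor} distance of the projected configuration: the nearest neighbor downstairs may come from a different sheet, so the small upstairs NND values are not a subset of the at most $K$ downstairs NND values, and your count ``$K$ plus a packing number'' does not follow. The paper sidesteps this entirely by never asserting that $M_O$ has bounded geodesic combinatorics: it runs Ga\u{i}dukov in $M_O$ but applies Lemma \ref{noasymptotic} to the \emph{projection} of the ray in $M$, and rules out the flat Klein bottle by exhibiting self-intersecting geodesics in $M$ itself. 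Your argument can be repaired the same way, but as written both your $\chi(M)<0$ non-orientable case and your Klein bottle case lean on the unproved covering claim.
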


\begin{proof}
Let $M_O$ denote a connected component of the oriented double cover of $M$.  Consider first the case when $M_O$ has positive genus.  Fix a simple closed curve $\gamma_0 \in M_O $, and let $p \in M_O $.  As in Theorem \ref{ray}, pick a geodesic ray passing through $p$ asymptotic to a closed geodesic $\gamma_p $ in the homotopy class of $\gamma_0 $.  Since $M$ has bounded geodesic combinatorics, Lemma \ref{noasymptotic} implies that the ray's projection in $M$ cannot be nontrivially asymptotic, so its image in $M_O $ is exactly $\gamma_p $.  Therefore, we have a closed geodesic $\gamma_p $ containing $p $ in the homotopy class of $\gamma_0 $.  Recall from Theorem \ref{ray} that $\gamma_p$ lifts to a minimizing geodesic in the universal cover.

We claim that the collection $\{\gamma_p\} $ is a foliation of $M_O $ by circles.  First, each $\gamma_p $ must be simple.  For otherwise, since it can be homotoped to be simple, there must be a pair of arcs on $\gamma_p $ that bound a bigon in $M_O $.  We can then replace one of these arcs by the other and smooth out the corners to create a new curve (based) homotopic to $\gamma_p $ of shorter length, violating the condition that $\gamma_p $ lifts to a distance minimizing geodesic in the universal cover.  Next, assume that $\gamma_p $ and $\gamma_q $ intersect but are not equal.  Since they can be homotoped to be disjoint, there is an arc on each which when combined bound a bigon in $ M_O $.  This is a contradiction as before.  

Since $M_O $ is orientable and can be foliated by circles, it is topologically a torus.  Each free homotopy class of closed curves on the torus admits a simple representative.  Therefore, repeating the argument above proves that for each homotopy class there is a foliation of $M_O$ by closed geodesics in that class.  Theorem \ref{foliation} now implies that $M_O $ is flat.  Therefore, $M$ is either a flat torus or a flat Klein bottle.  It is easy, however, to find geodesics on a flat Klein bottle that intersect themselves arbitrarily many times, so since $M$ has bounded geodesic combinatorics, Lemma \ref{intersections} implies that $M$ is a flat torus.

Assume now that $M$ is homeomorphic to either $\BS^2 $ or $\BR \BP^2 $, and that $M$ is either real-analytic or positively curved.  It suffices to show that all geodesics in $M$ are closed.  Indeed, Gromoll and Grove \cite{GrGr} have proven that any Riemannian metric on $\BS^2 $ with all geodesics closed is in fact an $SC$-metric, and Pries \cite{Pr} showed that a Riemannian metric on $\BR \BP^2 $ has all its geodesics closed if and only if it has constant curvature.  

Fix a point $p \in M $.  For $v \in SM_p$, let $\gamma_v:\BR \rightarrow M$ denote the unit speed paramaterization of the geodesic with $\dot{\gamma_v}(0)=v$. 
  Define $$U =\{ v \in SM_p \ | \ \exists \ T>0 \ \text{such that} \ \gamma_v(T) \ \text {is conjugate to } p \}.  $$ Since $M$ has bounded geodesic combinatorics, Lemma \ref{simpleorclosed} states that every non-closed geodesic in $M$ is simple and accumulates on itself in $TM$.  This fact combined with the following Lemma implies that for $v \in U$, $\gamma_v $ is closed.

\begin{lem}
\label{conjugateimpliesintersection}
Assume that $M$ is a Riemannian surface, $\gamma :\BR\to M $ is a geodesic, and $\gamma (a) $ and $\gamma (b) $ are conjugate along $\gamma $.  Then given $\epsilon >0 $, there exists $\delta > 0 $ so that if $\alpha :\BR \to M $ is a geodesic with $$|\alpha (t) -\gamma (t) | <\delta, \ \forall  t \in [a -\epsilon,b +\epsilon] $$ then $\alpha ([a -\epsilon,b +\epsilon]) $ must intersect $\gamma ([a -\epsilon,b +\epsilon]) $.
\end{lem}

\begin{proof}
Assume without loss of generality that there is no point on $\gamma $ before $\gamma (b) $ that is conjugate to $\gamma (a) $.  Extend $\gamma $ to a geodesic variation $\gamma_t : [a -\epsilon,b +\epsilon] \to M $ with $\gamma_0 = \gamma $ such that the Jacobi field $\vec {v } (s) = \frac{\partial}{\partial t}\gamma_t(s) $ is nontrivial, orthogonal to $\gamma' (s) $ and vanishes at $s= a $ and $s=b$, but not between.  Because $\vec {v } $ is not identically zero and determined by the covariant derivative $\frac {\DD} {ds }\vec {v } (s ) |_{ s = a } $, this derivative must be nonzero.  Thus $\vec {v } (s) $ must lie for $s >a $ on the side of $\gamma' (s) $ opposite to that on which it lies for $s <a $.  A similar statement holds for $s $ near $b $.  Therefore, by considering only $ t $ sufficiently close to $0 $, we can ensure that each $\gamma_t $ intersects $\gamma$ very close to $\gamma (a) $ and $\gamma (b) $.  Moreover, $\gamma_t $ cannot intersect $\gamma $ far away from $\gamma (a) $ and $\gamma (b) $ because $\vec {v } (s) $ vanishes only when $s = a $ or $s =b $.  Finally, since the intersections between two geodesics cannot be too close together we see that $\gamma $ and $\gamma_t $ intersect exactly twice.  Thus the middle portion of each $\gamma_t $ lies on one side of $\gamma $ and the ends lie on the other side.  

Choose $\delta $ small enough so that if $\alpha $ is as in the statement of the Lemma then it intersects the middle portion of some geodesic in the variation $\gamma_t $.  Assume that $\alpha ([a -\epsilon,b +\epsilon]) $ does not intersect $\gamma ([a -\epsilon,b +\epsilon]) $ and let $\gamma_{t_0 } $ be the first geodesic in the variation that $\alpha([a -\epsilon,b +\epsilon])  $ does intersects.  The intersection of $\alpha $ and $\gamma_{ t_0} $ must lie in the interior of both segments, since close to their endpoints the segments lie on opposite sides of $\gamma $.  Because $\alpha $ does not intersect $\gamma_t $ for $t <t_0 $, its intersection with $\gamma_{ t_0} $ cannot be transverse.  This is a contradiction, because geodesics always intersect transversely.
\end{proof} 

Our goal, then, is to show that $U = SM_p$.  When $M$ has positive curvature, this is a simple consequence of the Rauch comparison theorem (see e.g. \cite[Theorem 2.3, Chapter 10]{do Ca}).  So, from now on we assume that $M$ is real-analytic.  Since $\tilde{M}$ is compact, $U $ must be nonempty.  Moreover, a result of Warner \cite[Theorem 3.1]{Wa} implies that each component of the tangential conjugate locus of $M$ at $p $ is a properly embedded $1$-submanifold of $TM_p $ transverse to the radial direction.  Since $U $ is the image of the tangential conjugate locus under the radial projection $ TM_p \setminus \{0\} \to SM_p $, $U $ is an open non-empty subset of $SM_p $.  Seeking a contradiction, assume that $I\neq SM_p$ and let $I \subset U $ be a maximal open interval.

\begin{lem}
\label{commonperiod}
If $C \subset I$ is a compact subinterval, then the geodesics $\{ \gamma_v \ | \ v \in C \} $ have a common period.
\end{lem}

The proof of Lemma \ref{commonperiod} proceeds in several steps.  We will first show that for $v\in C $ there is an upper bound for the least period of $\gamma_v $.  Next, we prove that there is a continuous function $ l: C \to \BR $ such that $ l (v) $ is a period for $\gamma_v $, and then prove that the function is smooth.  In the final step, we use $l $ to parameterize the family $\gamma_v $ as a proper geodesic variation and deduce from the first variational formula that in fact $ l $ is constant.

\begin{proof}[Proof of Lemma \ref{commonperiod}]

For each $v \in C $, let $lp (v) $ be the least period of $\gamma_v $.  We first show that $lp (v) $ is bounded on $C $.  Assume on the contrary that there is a sequence $v_n\in C $ with the property that $lp (v) \to \infty $.  Without loss of generality, we may assume that $v_n \to v \in C $.  Define $\hat{\gamma}_v,\hat{\gamma}_{v_n} : \BR \to SM $ to be lifts of $\gamma_v $ and $\gamma_{ v_n } $ to the unit tangent bundle of $M$, and let $N $ be a small regular neighborhood of $\hat{\gamma}_v (\BR) \subset SM $.  

Since $v \in C $, there is some $T >0 $ so that $\gamma_v (0) $ and $\gamma_v (T) $ are conjugate along $\gamma_v $.  Assume that $\beta \subset M$ is a geodesic segment that lifts into $N $ and has length at least $K =T +lp (v) $.  If $N $ is small, $\beta $ has a subsegment that closely tracks $\gamma_v ([0,T]) $.  Lemma \ref{conjugateimpliesintersection} implies that if this subsegment stays close enough to $\gamma_v ([0,T]) $, then $\beta $ must in fact intersect $\gamma_v $.   Therefore we can choose $N $ small enough so that any such geodesic segment $\beta $ must intersect $\gamma_v $.

Assume that $n $ is very large.  Then $\hat{\gamma}_{ v_n } $ spends at least a duration of $K $ inside of $N $.  Moreover, $\hat{\gamma }_{ v_n } $ must eventually exit $N $.  For otherwise, $\gamma_{ v_n } $ will be a closed curve homotopic within an annular neighborhood of $\gamma_v $ to a large power of $\gamma_v $, and therefore will intersect itself more often than is allowed by Lemma \ref{intersections}.  Set $t_n $ to be the first time at which $\hat{\gamma }_{ v_n } $ exits $N $, and let $g_n =\gamma_{ v_n } ([t_n -K, t_n]) $.  Since $g_n $ lifts into $N $ and has length $K $, it must intersect $\gamma_v $.  If the angle of intersection is very small, then $g_n $ will be forced to track $\gamma_v $ for a long time, and therefore cannot exit $N $ before a duration of $K $ had elapsed.  Thus there is some $\alpha >0 $ such that each $g_n $ intersects $\gamma_v $ with angle at least $\alpha $.  But if $n $ is very large, $\gamma_{ v_n } ([0, t_n -K]) $ winds many times around $\gamma_v $ while staying close enough to pass through any geodesic segment intersecting $\gamma_v $ with angle at least $\alpha $.  Therefore, $\gamma_{ v_n } ([0, t_n -K]) $ must intersect $ g_n =\gamma_{ v_n } ([t_n -K, t_n]) $ many times as well.  This is impossible, since the number of self intersections of $\gamma_{ v_n } $ is limited by Lemma \ref{intersections}.  

We now know that $lp (v) $ has an upper bound on $C $, which we set to be $L $.  One geometric consequence of this is that a convergent sequence $v_n \to v\in C $ gives a sequence of geodesics $\gamma_{ v_n } $ whose images converge to the image of $\gamma_v $ in the Hausdorff topology on closed subsets of $M $.  Note that for large $n $, this implies that $lp (v_n) $ is very close to a period of $\gamma_v $.

Our aim is now to find a continuous function $ l: C \to \BR $ with $ l (v ) $ a period for $\gamma_v $.  We will start by defining it locally.  So, fix a vector $ w\in C $; our goal is to produce a neighborhood $N_w \subset C$ of $w $ and a continuous function $l_w : N_w \to \BR $ that gives periods for $\gamma_v, \ v \in N_w $.  To begin with, set $$l_w (w) = \left\lfloor \frac {L } {lp (w) } \right\rfloor  \ lp(w) .  $$  Define $N_w $ to be a neighborhood of $w $ small enough so that if $v \in N_w $, then $lp (v) $ is within $\frac {lp (w)} {8 \left\lfloor \frac {L }  {lp (w) }\right\rfloor  } $ of a period of $\gamma_w $.   Then for $v \in N_w $, set
\begin{center}
$ l_w (v) = $ the period of $\gamma_v  $ that is within $\frac {lp (w) } {8}  $ of $l_w (w) .  $  
\end{center}

We claim that $l_w : N_w \to \BR $ is continuous.  Assume that $v_n \to v \in N_w $.  By definition, $l_w (v) $ and each $l_w (v_n) $ are within $\frac {lp (w) } {8} $ of $ l_w (w) $.  Therefore $| l_w (v) - l_w (v_n) | < \frac {lp (w) } {4} $.  Since $$lp (v) > lp(w) - \frac {lp (w)} {8 \left\lfloor \frac {L }  {lp (w) }\right\rfloor   } > \frac { lp(w) } {2}, $$ this implies $$| l_w (v) - l_w (v_n) | < \frac {lp (v) } {2}.  $$  So, $l_w (v_n) $ is closer to $l_w (v) $ than to any other period of $\gamma_v $.  As in the previous paragraph, the fact that least periods are bounded in $C $ implies that when $n $ is large, $l_w (v_n) $ is very close to a period of $\gamma_v $.  This period must then be $l_w (v) $.  Therefore, $l_w (v_n) \to l_w (v) $.  

We now have for each $w \in C$, a continuous function $l_w: N_w \to \BR $ such that $l_w (v) $ is a period for $\gamma_v $.  Pick a finite set $\{ w_1,\ldots, w_n \} \subset C $ so that $N_{ w_i } \cap N_{ w_{i +1} } \neq \emptyset $ and $\cup_{i = 1}^{ n } N_{ w_i } $ covers $C $.  Since $ l_{ w_1 } $ and $l_{ w_2} $ are continuous and their quotient is rational valued, they can be multiplied by appropriate positive integers so that they coincide on the intersection of their domains.  Note that this does not change the property that they pick out periods for $\gamma_v , \ v \in C $.  A similar trick applies to make $l_{ w_3} $ agree with the previous two.  Continuing inductively, we can piece our locally defined functions together to create a continuous function $l : C \to \BR $.

To show that $l $ is smooth, pick a vector $w\in C $ and choose a coordinate chart $\phi : O \to \BR^2 $ with $\phi (p) = (0, 0) $ and $\derivative\phi_p (\gamma_w' (l (w)w)) = (1, 0) $.  Let $\pi : \BR^2 \to \BR $ be the projection onto the first coordinate.  If $V \subset TM_p $ is a small neighborhood of $l (w) w $, then $\pi \circ \phi \circ \exp_p: V \to \BR $ is defined and a submersion.  By the Implicit Function Theorem, $(\pi \circ \phi \circ \exp_p )^{ - 1} ( 0) $ is a smooth $1 $-submanifold of $V $.  Since $\{l (v) v\ | \ v \in C \} \cap V $ is a continuous $1 $-manifold contained in that preimage, the connected components of both that contain $l (w) w $ must coincide.  This shows that $l $ is smooth in a neighborhood of $w \in C $.

Finally, define a smooth geodesic variation $G : C \times [0, 1] \to M $ by $G (v, t) = \gamma_v(l (v) t) $. Then $G (v,0) = G (v,1) = p $ for all $v \in C $, and the length of each segment $G (v, [0, 1]) $ is $l (v) $.  By the First Variational Formula, $\frac{\partial}{\partial v } l (v) = 0 $.  So $l $ is constant.
\end{proof}

To finish the proof, fix some compact subinterval $C \subset I $ and assume that $L $ is a common period for $\{ \gamma_v \ | \ v \in C \} $.  Since $M$ is real-analytic, the square of the Sasaki metric $\dist_{ TM }^2 : TM \times TM \to \BR $ is as well.  Define a function $f : I \to \BR $ by $$f (v) = \dist_{ TM }^2 \Big([\gamma_v (L),\gamma_v' (L)], [p,\gamma_v' (0)]\Big).  $$  Then $f $ is real-analytic and vanishes on $C $, so it must be identically zero.  This shows that $L $ is a common period for $\{\gamma_v \ | \ v \in I \} $.  But then if $v $ is an endpoint of $I $, $L $ must also be a period for $\gamma_v $.  Furthermore, $\gamma_v $ is part of a (one-sided) geodesic variation all of whose geodesics start at $p $ and close up at time $L $.  Therefore $\gamma_v $ contains a point conjugate to $p $. This is a contradiction, by definition of $I $.
\end{proof}

\subsection{Remarks} 
The definition of bounded geodesic combinatorics is never used explicitly in Theorem \ref{smooththeorem}.  In fact, what we have done is characterize those (real-analytic) Riemannian surfaces on which
\begin{enumerate}
\item every non-closed geodesic accumulates on itself in $TM $ 
\item there is an upper bound for the number of times a geodesic segment can intersect itself.
\end{enumerate}
It would be interesting to see if a similar characterization holds in the presence of only one of these conditions; assuming the latter condition seems particularly likely to result in the same conclusion.

In a similar vein, parts of Theorem \ref{smooththeorem} can be proven differently by making more judicious use of the geodesic self-intersection bounds given by Lemma \ref{intersections}.  For instance, higher genus surfaces cannot support a metric with bounded geodesic combinatorics because the topology forces geodesics in certain homotopy classes to self intersect many times.  The case in which $M$ is positively curved can also be handled as follows.  Since $M$ is compact, it has a lower curvature bound $\kappa > 0 $.  By \cite{VaMa}, no geodesic arc in $M$ of length greater than or equal to $\frac{3\pi}{\sqrt{\kappa} }$ can be simple.  Every geodesic in $M$ must then be closed, for otherwise it would have infinitely many self intersections, which is prohibited by Lemma \ref{intersections}.  As mentioned in the proof above, this implies that $M $ is either a round $\BR\BP^2 $ or $\BS^2 $ with an $ SC $-metric.

It is likely that the conclusion of Theorem \ref{smooththeorem} holds without assuming that $M$ is positively curved or real-analytic.  Our argument applies in the smooth case through Lemma \ref{commonperiod}, but we are not sure how to show without using real-analyticity that the least period of $\gamma_v $ does not approach infinity as $v $ tends towards the endpoints of $I$.  We are confident that if this happens the number of self intersections of $\gamma_v $ should also approach infinity, but writing down an argument has proven difficult.

{\tiny \sc Ian Biringer, University of Chicago, biringer@math.uchicago.edu}

{\tiny \sc Benjamin Schmidt, University of Chicago, schmidt@math.uchicago.edu}

\end{document}